\definecolor{redblue}{rgb}{0.9,0,0.4}
\definecolor{bluered}{rgb}{0.2,0,0.9}
\definecolor{light-red}{rgb}{0.9,0.15,0.15}
\definecolor{dark-red}{rgb}{0.4,0.15,0.15}
\definecolor{redblue}{rgb}{0.9,0,0.4}
\definecolor{bluered}{rgb}{0.2,0,0.9}
\definecolor{light-red}{rgb}{0.9,0.15,0.15}
\definecolor{dark-red}{rgb}{0.4,0.15,0.15}
\definecolor{dark-blue}{rgb}{0.15,0.15,0.4}
\definecolor{medium-blue}{rgb}{0,0,0.5}
\theoremstyle{plain}
\newtheorem{theorem}{Theorem}[section]
\newtheorem{lemma}[theorem]{Lemma}
\newtheorem{corollary}[theorem]{Corollary}
\theoremstyle{definition}
\newtheorem{definition}[theorem]{Definition}
\newtheorem{example}[theorem]{Example}
\DeclareMathOperator{\at}{\,{\rule[-3mm]{.1mm}{8mm}}\,}
\DeclareMathOperator{\jac}{Jac}
\DeclareMathOperator{\diag}{diag}
\begin{document}

\title{Inverse Spectral Problems for Linked Vibrating Systems and Structured Matrix Polynomials}
\author{Keivan Hassani Monfared and Peter Lancaster\\ k1monfared@gmail.com and lancaste@ucalgary.ca \\ Department of Mathematics and Statistics, University of Calgary}
\maketitle

\begin{abstract}
	We show that for a given set $\Lambda$ of $nk$ distinct real numbers $\lambda_1, \lambda_2, \ldots, \lambda_{nk}$ and $k$ graphs on $n$ nodes, $G_0, G_1,\ldots,G_{k-1}$, there are real symmetric $n\times n$ matrices $A_s$, $s=0,1,\ldots, k$, such that the matrix polynomial $A(z) := A_k z^k + \cdots + A_1 z + A_0$ has $\Lambda$ as its spectrum, the graph of $A_s$ is $G_s$ for $s=0,1,\ldots,k-1$, and $A_k$ is an arbitrary positive definite diagonal matrix. When $k=2$, this solves a physically significant inverse eigenvalue problem for linked vibrating systems (see Corollary 5.3).
\end{abstract}

\textbf{Keywords:}
	Quadratic Eigenvalue Problem, Inverse Spectrum Problem, Structured Vibrating System, Jacobian Method, Perturbation, Graph\\
	
	\textbf{MSC 2010:} 05C50,  15A18, 15A29, 65F10, 65F18

\section{Introduction}
	Inverse eigenvalue problems are of interest in both theory and applications. See, for example, the book of Gladwell \cite{g04} for applications in mechanics, the review article by Chu and Golub \cite{cg02} for linear problems, the monograph by Chu and Golub \cite{cg05} for general theory, algorithms and applications, and many references collected from various disciplines. In particular, the  \textit{Quadratic Inverse Eigenvalue Problems} (QIEP) are important and challenging because the general techniques for solving {\em linear} inverse eigenvalue problems cannot be applied directly. We emphasize that the structure, or linkage, imposed here is a feature of the physical systems illustrated in Section 2, and ``linked'' systems of this kind are our main concern.

Although the QIEP is important, the theory is presented here in the context of higher degree inverse spectral problems, and this introduction serves to set the scene and provide motivation for the more general theory developed in the main body of the paper -- starting with Section 3. The techniques used here generate systems with entirely real spectrum and perturbations which preserve this property.  Although, the method could be generalized to admit non-real conjugate pairs in the spectrum and the associated oscillatory behaviour. For example, the linear eigenvalue problem with conjugate pairs is solved in \cite{h17}.

QIEPs appear repeatedly in various scientific areas including structural mechanics, acoustic systems, electrical oscillations, fluid mechanics, signal processing, and finite element discretisation of partial differential equations. In most applications properties of the underlying physical system determine the parameters (matrix coefficients), while the behaviour of the system can be interpreted in terms of associated eigenvalues and eigenvectors. See Sections 5.3 and 5.4 of \cite{cg05}, where symmetric QIEPs are discussed. 

In this article it will be convenient to distinguish an eigenvalue of a matrix from a zero of the determinant of a matrix-valued function, which we call a {\em proper value}. (Thus, an eigenvalue of matrix $A$ is a proper value of $Iz-A$.) Given a quadratic matrix polynomial 
\begin{equation}\label{eq1} 
	L(z) = M z^2 + D z + K,\hspace*{1cm} M,D,K \in \mathbb{R}^{n\times n},
\end{equation} 
the direct problem is to find scalars $z_0$ and nonzero vectors\footnote{It is our convention to write members of $\mathbb{R}^n$ as {\bf column} vectors unless stated otherwise, and to denote them with bold lower case letters.} $\bm x \in C^n$ satisfying $L(z_0)\bm x =\bm 0$. The scalars $z_0$ and the vectors $\bm x$ are, respectively, {\em proper values} and {\em proper vectors} of the quadratic matrix polynomial $L(z)$. 

	Many applications, mathematical properties, and a variety of numerical techniques for the direct quadratic problem are surveyed in \cite{tm01}. On the other hand, the ``pole assignment problem'' can be examined in the context of a quadratic inverse eigenvalue problem \cite{nk01,der97,cd96,c02}, and a general technique for constructing families of quadratic matrix polynomials with prescribed semisimple eigenstructure was proposed in \cite{l07}. In \cite{bcs07} the authors address the problem when a partial list of eigenvalues and eigenvectors is given, and they provide a quadratically convergent Newton-type method. Cai et al.~in \cite{cklx09} and Yuan et al.~in \cite{yd11} deal with problems in which complete lists of eigenvalues and eigenpairs (and no definiteness constraints are imposed on $M,\;D,\;K$). In \cite{re96} and \cite{b08} the symmetric tridiagonal case with a partial list of eigenvalues and eigenvectors is discussed. 
        
	 A symmetric inverse quadratic proper value problem calls for the construction of a family of real symmetric quadratic matrix polynomials (possibly with some definiteness restrictions on the coefficients) consistent with prescribed spectral data \cite{lz14}. 

	An inverse proper value problem may be ill-posed \cite{cg05}, and this is particularly so for inverse quadratic proper value problems (IQPVP) arising from applications. This is because structure imposed on an IQPVP depends inherently on the connectivity of the underlying physical system. In particular, it is frequently necessary that, in the inverse problem, the reconstructed system (and hence the matrix polynomial) satisfies a \emph{connectivity} structure (see Examples \ref{ex:linearsystem} and \ref{ex:firstgraph}). In particular, the quadratic inverse problem for physical systems with a {\em serially linked structure} is studied in \cite{cdy07}, and there are numerous other studies on generally linked structures (see \cite{dlc09,ldc10-1,ldc10-2}, for example). 

In order to be precise about ``linked structure'' we need the following definitions:\\
A {\em (simple) graph} $G=(V,E)$ consists of two sets $V$ and $E$, where $V$, the set of {\em vertices} $v_i$ is, in our context, a finite subset of positive integers, e.g. $V = \{1,2,\ldots,n \}$, and $E$ is a set of pairs of vertices $\{v_i,v_j\}$ (with $v_i \neq v_j$) which are called the {\em edges} of $G$. (In the sense of \cite{hs13}, the graphs are ``loopless''.) 

If $\{v_i,v_j\} \in E$ we say $v_i$ and $v_j$ are \emph{adjacent} (See \cite{bm08}). Clearly, the number of edges in $G$ cannot exceed $\frac{n(n-1)}{2}$. Furthermore, the graph of a diagonal matrix is empty. 

In order to visualize graphs, we usually represent vertices with dots or circles in the plane, and if $v_i$ is adjacent to $v_j$, then we draw a line (or a curve) connecting $v_i$ to $v_j$. The {\em graph of} a real symmetric matrix $A \in \mathbb{R}^{n\times n}$ is a simple graph on $n$ vertices $1,2,\ldots,n$, and vertices $i$ and $j$ ($i\neq j$) are adjacent if and only if $a_{ij} \neq 0$. Note that the diagonal entries of $A$ have no role in this construction.

\section{Examples and problem formulation}\label{Sec:ex}

We present two (connected) examples from mechanics. The first (Example \ref{ex:linearsystem}) is a fundamental case where masses, springs, and dampers are \textit{serially linked} together, and both ends are \textit{fixed}. The second one is a \textit{generally linked} system and is divided into two parts (Examples \ref{ex:firstgraph} and \ref{ex:systemofmassesandsprings}) and is from \cite{cdy07}.

\begin{example} \label{ex:linearsystem}
	Consider the \textit{serially linked} system of masses and springs sketched in Figure \ref{fig:pathsystemofmassesandsprings}. It is assumed that springs respond according to Hooke's law and that damping is negatively proportional to the velocity. All parameters $m,\,d,\,k$ are {\em positive}, and are associated with mass, damping, and stiffness, respectively.
	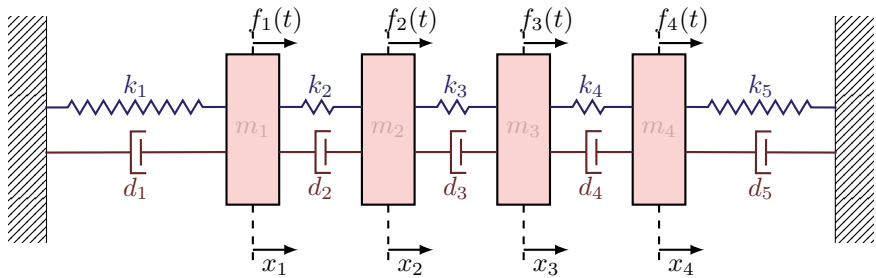
\begin{figure}[h]
	\begin{center}
	\begin{tikzpicture}[scale=.6]
 \tikzstyle{spring}=[color=dark-blue,thick,decorate,decoration={zigzag,pre length=0.3cm,post
 length=0.3cm,segment length=6}]
 \tikzstyle{damper}=[color=dark-red,thick,decoration={markings, mark connection node=dmp, mark=at position 0.5 with 
   {
     \node (dmp) [color=dark-red,thick,inner sep=0pt,transform shape,rotate=-90,minimum width=15pt,minimum height=3pt,draw=none] {};
     \draw [color=dark-red,thick] ($(dmp.north east)+(2pt,0)$) -- (dmp.south east) -- (dmp.south west) -- ($(dmp.north west)+(2pt,0)$);
     \draw [color=dark-red,thick] ($(dmp.north)+(0,-5pt)$) -- ($(dmp.north)+(0,5pt)$);
   }
 }, decorate]
 \tikzstyle{ground}=[fill,pattern=north east lines,draw=none,minimum width=6cm,minimum height=0.5cm]
  
 \node (wall) [ground, rotate=-90, minimum width=3cm,yshift=-3cm] {};
 \draw (wall.north east) -- (wall.north west);
 
 \node (wall2) [ground, rotate=90, minimum width=3cm,yshift=-8cm] {};
 \draw (wall2.north east) -- (wall2.north west); 
 
 \node[draw,outer sep=0pt,thick,fill = light-red,fill opacity=0.2, text opacity = 1] (M1) at (0,0)[minimum width=.4cm, minimum height=2cm] {$m_1$};
 \node[draw,outer sep=0pt,thick,fill = light-red,fill opacity=0.2, text opacity = 1] (M2) at (3,0) [minimum width=.4cm, minimum height=2cm] {$m_2$};
 \node[draw,outer sep=0pt,thick,fill = light-red,fill opacity=0.2, text opacity = 1] (M3) at (6,0) [minimum width=.4cm, minimum height=2cm] {$m_3$};
 \node[draw,outer sep=0pt,thick,fill = light-red,fill opacity=0.2, text opacity = 1] (M4) at (9,0) [minimum width=.4cm, minimum height=2cm] {$m_4$};
 
 \draw[spring] ($(wall.east) + (.4,3)$) -- ($(M1.west) + (0,.5)$) node [midway,above] {$k_1$};
 \draw[spring] ($(M1.east) + (0,.5)$) -- ($(M2.west) + (0,.5)$) node [midway,above] {$k_2$};
 \draw[spring] ($(M2.east) + (0,.5)$) -- ($(M3.west) + (0,.5)$) node [midway,above] {$k_3$};
 \draw[spring] ($(M3.east) + (0,.5)$) -- ($(M4.west) + (0,.5)$) node [midway,above] {$k_4$};
 \draw[spring] ($(M4.east) + (0,.5)$) -- ($(wall2.west) + (-.4,3)$) node [midway,above] {$k_5$};
 
 \draw[damper] ($(wall.east) + (.4,2)$) -- ($(M1.west) + (0,-.5)$) node [midway,below,yshift=-2mm] {$d_1$};
 \draw[damper] ($(M1.east) + (0,-.5)$) -- ($(M2.west) + (0,-.5)$) node [midway,below,yshift=-2mm] {$d_2$};
 \draw[damper] ($(M2.east) + (0,-.5)$) -- ($(M3.west) + (0,-.5)$) node [midway,below,yshift=-2mm] {$d_3$};
 \draw[damper] ($(M3.east) + (0,-.5)$) -- ($(M4.west) + (0,-.5)$) node [midway,below,yshift=-2mm] {$d_4$};
 \draw[damper] ($(M4.east) + (0,-.5)$) -- ($(wall2.west) + (-.4,2)$) node [midway,below,yshift=-2mm] {$d_5$};
 
 \draw[thick, dashed] ($(M1.north)$) -- ($(M1.north) + (0,.5)$);
 \draw[thick, -latex] ($(M1.north) + (0,0.25)$) -- ($(M1.north) + (1,0.25)$) node [midway, above] {$f_1(t)$};
 \draw[thick, dashed] ($(M2.north)$) -- ($(M2.north) + (0,.5)$);
 \draw[thick, -latex] ($(M2.north) + (0,0.25)$) -- ($(M2.north) + (1,0.25)$) node [midway, above] {$f_2(t)$};
 \draw[thick, dashed] ($(M3.north)$) -- ($(M3.north) + (0,.5)$);
 \draw[thick, -latex] ($(M3.north) + (0,0.25)$) -- ($(M3.north) + (1,0.25)$) node [midway, above] {$f_3(t)$};
 \draw[thick, dashed] ($(M4.north)$) -- ($(M4.north) + (0,.5)$);
 \draw[thick, -latex] ($(M4.north) + (0,0.25)$) -- ($(M4.north) + (1,0.25)$) node [midway, above] {$f_4(t)$};
 
 \draw[thick, dashed] ($(M1.south)$) -- ($(M1.south) + (0,-1.25)$);
 \draw[thick, -latex] ($(M1.south) + (0,-1)$) -- ($(M1.south) + (1,-1)$) node [midway, below] {$x_1$};
 \draw[thick, dashed] ($(M2.south)$) -- ($(M2.south) + (0,-1.25)$);
 \draw[thick, -latex] ($(M2.south) + (0,-1)$) -- ($(M2.south) + (1,-1)$) node [midway, below] {$x_2$};
 \draw[thick, dashed] ($(M3.south)$) -- ($(M3.south) + (0,-1.25)$);
 \draw[thick, -latex] ($(M3.south) + (0,-1)$) -- ($(M3.south) + (1,-1)$) node [midway, below] {$x_3$};
 \draw[thick, dashed] ($(M4.south)$) -- ($(M4.south) + (0,-1.25)$);
 \draw[thick, -latex] ($(M4.south) + (0,-1)$) -- ($(M4.south) + (1,-1)$) node [midway, below] {$x_4$};
	\end{tikzpicture}
	\end{center}
	\caption{A four-degree-of-freedom serially linked mass-spring system.}
	\label{fig:pathsystemofmassesandsprings}
	\end{figure}
	
	There is a corresponding matrix polynomial
	\begin{equation} 
	A(z) = A_2 z^2 + A_1 z + A_0,\hspace*{.5cm} A_s \in \mathbb{R}^{4\times 4},\hspace*{.5cm} s = 0,1,2,
\end{equation}
	where \begin{small}
\begin{align}\label{tridiagonalmatrices}
	\begin{split}
	A_2 &= \left[ \begin{array}{cccc}
		m_1 & 0 & 0 & 0\\
		0 & m_2 & 0 & 0\\
		0 & 0 & m_3 & 0\\
		0 & 0 & 0 & m_4	
	\end{array} \right],\\	
	A_1 &= \left[ \begin{array}{cccc}
		d_1 + d_2 & {\color{dark-red}{{-d_2}}} & 0 & 0\\
		{\color{dark-red}{{-d_2}}} & d_2 + d_3 & {\color{dark-red}{{-d_3}}} & 0\\
		0 &{\color{dark-red}{{-d_3}}} & d_3 + d_4 & {\color{dark-red}{{-d_4}}}\\
		0 & 0 & {\color{dark-red}{{-d_4}}} & d_4 + d_5
	\end{array} \right],   \\
	A_0 &= \left[ \begin{array}{cccc}
		k_1 + k_2 & {\color{dark-blue}{{-k_2}}} & 0 & 0\\
		{\color{dark-blue}{{-k_2}}} & k_2 + k_3 & {\color{dark-blue}{{-k_3}}} & 0\\
		0 & {\color{dark-blue}{{-k_3}}} & k_3 + k_4 & {\color{dark-blue}{{-k_4}}}\\
		0 & 0 & {\color{dark-blue}{{-k_4}}} & k_4 + k_5	
	\end{array} \right].
	\end{split}
\end{align}
\end{small}

The graph of $A_2$ consists of four distinct vertices (it has no edges).
Because the $d$'s and $k$'s are all nonzero, the graphs of $A_0$ and $A_1$ coincide. For convenience, we name them $G$ and $H$ respectively (see Figure \ref{fig:graphsofmatricesdeeandkaytridiagonal}). 

\begin{figure}[h]
\begin{center}
\begin{tikzpicture}[scale=.6,node distance = 1.5cm]
	\begin{scope}[shift={(0,0)},scale=1]
	\node[] () at (-1.5,0) {$G:$};
	\node[draw,circle,fill = light-red,fill opacity=0.2, text opacity = 1] (1) at (0,0) {$1$};
	\node[draw,circle,fill = light-red,fill opacity=0.2, text opacity = 1] (2) [right of = 1] {$2$};
	\node[draw,circle,fill = light-red,fill opacity=0.2, text opacity = 1] (3) [right of = 2] {$3$};
	\node[draw,circle,fill = light-red,fill opacity=0.2, text opacity = 1] (4) [right of = 3] {$4$};
	\draw[very thick,color=dark-blue] (1) --node [midway, above] {$-k_2$} (2) --node [midway, above] {$-k_3$} (3) --node [midway, above] {$-k_4$} (4);
	\end{scope}
	
	\begin{scope}[shift={(0,-4)},scale=1]
	\node[] () at (-1.5,0) {$H:$};

	\node[draw,circle,fill = light-red,fill opacity=0.2, text opacity = 1] (1) at (0,0) {$1$};
	\node[draw,circle,fill = light-red,fill opacity=0.2, text opacity = 1] (2) [right of = 1] {$2$};
	\node[draw,circle,fill = light-red,fill opacity=0.2, text opacity = 1] (3) [right of = 2] {$3$};
	\node[draw,circle,fill = light-red,fill opacity=0.2, text opacity = 1] (4) [right of = 3] {$4$};
	\draw[very thick,color=dark-red] (1) --node [midway, above] {$-d_2$} (2) --node [midway, above] {$-d_3$} (3) --node [midway, above] {$-d_4$} (4);
	\end{scope}
\end{tikzpicture}
\end{center}
\caption{Graphs of $A_0$ and $A_1$ in Eq. \eqref{tridiagonalmatrices}.}
\label{fig:graphsofmatricesdeeandkaytridiagonal}
\qed
\end{figure}
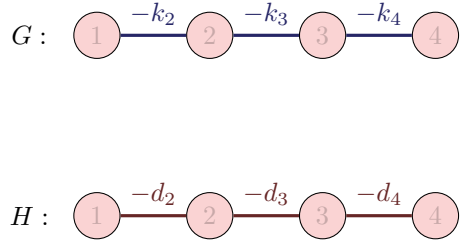 

In the later sections we will study how to perturb a diagonal matrix polynomial of degree two to
 achieve a new matrix polynomial, but the graphs of its coefficients are just those of this tridiagonal $A(z)$ (so that the physical structure of Figure \ref{fig:pathsystemofmassesandsprings} is maintained). In order to do this, we define matrices with variables on the diagonal entries and the nonzero entries of $A_0$ and $A_1$ in Eq.~\eqref{tridiagonalmatrices} as follows (where the diagonal entries of $A_s$ are $x_{sj}$'s and the off-diagonal entries are zero or $y_{sj}$'s). Thus, for $n=4$,

\begin{equation}
	A_0 = \left[ \begin{array}{cccc}
		x_{0,1} & y_{0,1} & 0 & 0\\
		y_{0,1} & x_{0,2} & y_{0,2} & 0\\
		0 & y_{0,2} & x_{0,3} & y_{0,3}\\
		0 & 0 & y_{0,3} & x_{0,4}
	\end{array} \right], \;\;\;
	A_1 = \left[ \begin{array}{cccc}
		x_{1,1} & y_{1,1} & 0 & 0\\
		y_{1,1} & x_{1,2} & y_{1,2} & 0\\
		0 & y_{1,2} & x_{1,3} & y_{1,3}\\
		0 & 0 & y_{1,3} & x_{1,4}
	\end{array} \right].
\end{equation}

\end{example}
 More generally, the procedure is given in Definition \ref{3.1}.\qed

In the next example we will, again, consider two graphs and their associated matrices and then, in Example \ref{ex:systemofmassesandsprings}, we see how they can be related to a physical network of masses and springs.

\begin{example} \label{ex:firstgraph}
	Define the (loopless) graph $G = (V_1,E_1)$ by $V_1 = \{ 1,2,3,4 \}$ with edges 
\begin{equation}
	 E_1 = \{ e_2 = \{1,2\},\; e_3 = \{2,3\},\; e_4 =  \{3,4\}, \; e_5 = \{1,3\} \},
\end{equation}
and the graph $H = (V_2,E_2)$ with $V_2 = \{ 1,2,3,4 \}$ and edges 
\begin{equation} 
	E_2 = \{ e_2 = \{1,3\},\; e_3 = \{3,4\} \}.
\end{equation}
Then we can visualize $G$ and $H$ as shown in Figure \ref{fig:graphsofmatricesdeeandkay}.

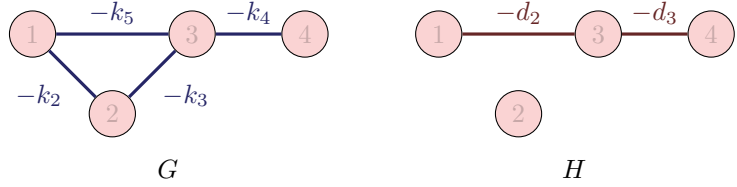
\begin{figure}[h]
\begin{center}
\begin{tikzpicture}[scale=.6,node distance = 1.5cm]
	\begin{scope}[shift={(0,0)},scale=1]
	\node[] () at (3,-3) {$G$};
	\node[draw,circle,fill = light-red,fill opacity=0.2, text opacity = 1] (1) at (0,0) {$1$};
	\node[draw,circle,fill = light-red,fill opacity=0.2, text opacity = 1] (2) [below right of = 1] {$2$};
	\node[draw,circle,fill = light-red,fill opacity=0.2, text opacity = 1] (3) [above right of = 2] {$3$};
	\node[draw,circle,fill = light-red,fill opacity=0.2, text opacity = 1] (4) [right of = 3] {$4$};
	\draw[very thick,color=dark-blue] (4) --node [midway, above] {$-k_4$} (3) --node [midway, below right] {$-k_3$} (2) --node [midway, below left] {$-k_2$} (1) --node [midway, above] {$-k_5$} (3);
	\end{scope}
	
	\begin{scope}[shift={(9,0)},scale=1]
	\node[] () at (3,-3) {$H$};
	\node[draw,circle,fill = light-red,fill opacity=0.2, text opacity = 1] (1) at (0,0) {$1$};
	\node[draw,circle,fill = light-red,fill opacity=0.2, text opacity = 1] (2) [below right of = 1] {$2$};
	\node[draw,circle,fill = light-red,fill opacity=0.2, text opacity = 1] (3) [above right of = 2] {$3$};
	\node[draw,circle,fill = light-red,fill opacity=0.2, text opacity = 1] (4) [right of = 3] {$4$};
	\draw[very thick,color=dark-red] (4) --node [midway, above] {$-d_3$} (3) --node [midway, above] {$-d_2$} (1);
	\end{scope}
\end{tikzpicture}
\end{center}
\caption{Graphs $G$ and $H$.}
\label{fig:graphsofmatricesdeeandkay}
\end{figure}   

Now define matrices $K$ and $D$ in Eq.~\eqref{eq1} as follows:
	\begin{align}\label{matricesdeekay}
	\begin{split}
	K &= \left[ \begin{array}{cccc}
		k_1 + k_2 + k_5 & {\color{dark-blue}{{-k_2}}} & {\color{dark-blue}{{-k_5}}} & 0\\
		{\color{dark-blue}{{-k_2}}} & k_2 + k_3 & {\color{dark-blue}{{-k_3}}} & 0\\
		{\color{dark-blue}{{-k_5}}} & {\color{dark-blue}{{-k_3}}} & k_3 + k_4 + k_5 & {\color{dark-blue}{{-k_4}}}\\
		0 & 0 & {\color{dark-blue}{{-k_4}}} & k_4	
	\end{array} \right],\\
	D &= \left[ \begin{array}{cccc}
		d_1 + d_2 & 0 & {\color{dark-red}{{-d_2}}} & 0\\
		0 & 0 & 0 & 0\\
		{\color{dark-red}{{-d_2}}} & 0 & d_2 + d_3 & {\color{dark-red}{{-d_3}}}\\
		0 & 0 & {\color{dark-red}{{-d_3}}} & d_3
	\end{array} \right]
	\end{split}
	\end{align}
	where all $d_i$ and $k_i$ are positive. It is easily seen that the graph of $K$ is $G$ of Figure \ref{fig:graphsofmatricesdeeandkay}, since $G$ is a graph on the $4$ vertices $1,2,3$, and $4$, and the $\{1,2\}$, $\{1,3\}$, $\{2,3\}$, and $\{3,4\}$ entries are all nonzero. Furthermore, $G$ has edges $\{1,2\}$, $\{1,3\}$, $\{2,3\}$, and $\{3,4\}$ corresponding to the nonzero entries of $K$. Similarly, one can check that the graph of $D$ is $H$.

Let $G$ and $H$ be the graphs shown in Figure \ref{fig:graphsofmatricesdeeandkay}, and let $D$ and $K$ be defined as in Eq.~\eqref{matricesdeekay}. Using Definition \ref{3.1}, we define matrices associated with the graphs:
\begin{equation}
	A_0 = \left[ \begin{array}{cccc}
		x_{0,1} & y_{0,1} & y_{0,2} & 0\\
		y_{0,1} & x_{0,2} & y_{0,3} & 0\\
		y_{0,2} & y_{0,3} & x_{0,3} & y_{0,4}\\
		0 & 0 & y_{0,4} & x_{0,4}
	\end{array} \right], \;\;\;
	A_1 = \left[ \begin{array}{cccc}
		x_{1,1} & 0 & y_{1,1} & 0\\
		0 & x_{1,2} & 0 & 0\\
		y_{1,1} & 0 & x_{1,3} & y_{1,2}\\
		0 & 0 & y_{1,2} & x_{1,4}
	\end{array} \right],
\end{equation}
so that
\begin{equation} 
	K = A_0(k_1+k_2+k_3,\; k_2+k_3,\; k_3+k_4+k_5,\; k_4,-k_2,-k_3,-k_4,-k_5), 
\end{equation}
\begin{equation}
	D= A_1(d_1+d_2,\; 0,\; d_2+d_3, \; d_3, \;-d_2,\; -d_3). \;\;\;\;  
	\qed
\end{equation} 
\end{example}

More generally, in this paper, structure is imposed on $L(z)$ in Eq.~\eqref{eq1} by requiring that $M$ is positive definite and diagonal, $D$ and $K$ are real and symmetric, and {\em nonzero entries in $D$ and $K$ are associated with the} connectivity {\em of nodes in a graph} - as illustrated above.

\begin{example}\label{ex:systemofmassesandsprings}
(See \cite{cdy07}.) A vibrating ``mass/spring'' system is sketched in Figure \ref{fig:systemofmassesandsprings}.
It is assumed that springs respond according to Hooke's law and that damping is negatively proportional to the velocity. 

The quadratic polynomial representing the dynamical equations of the system has the form Eq.~\eqref{eq1} with $n=4$. The coefficient matrices corresponding to this system are the diagonal matrix 
\begin{equation}
    M=\diag[m_1, m_2, m_3, m_4]  
\end{equation}
and matrices $D$ and $K$ in Eq.~\eqref{matricesdeekay}. 
It is important to note that (for physical reasons) the $m_i$, $d_i$, and $k_i$ parameters {\em are all positive}. 

\begin{figure}[h]
\begin{center}
 \begin{tikzpicture}[scale=.7]
styles 
\tikzstyle{spring}=[color=dark-blue,thick,decorate,decoration={zigzag,pre length=0.3cm,post
length=0.3cm,segment length=6}]

 \tikzstyle{damper}=[color=dark-red,thick,decoration={markings, mark connection node=dmp, mark=at position 0.5 with 
   {
     \node (dmp) [color=dark-red,thick,inner sep=0pt,transform shape,rotate=-90,minimum width=15pt,minimum height=3pt,draw=none] {};
     \draw [color=dark-red,thick] ($(dmp.north east)+(2pt,0)$) -- (dmp.south east) -- (dmp.south west) -- ($(dmp.north west)+(2pt,0)$);
     \draw [color=dark-red,thick] ($(dmp.north)+(0,-5pt)$) -- ($(dmp.north)+(0,5pt)$);
   }
 }, decorate]
 \tikzstyle{ground}=[fill,pattern=north east lines,draw=none,minimum width=6cm,minimum height=0.5cm]
  
 \node (wall) [ground, rotate=-90, minimum width=3cm,yshift=-3cm] {};
 \draw (wall.north east) -- (wall.north west);
 \node[draw,outer sep=0pt,thick,fill = light-red,fill opacity=0.2, text opacity = 1] (M1) at (0,0)[minimum width=.4cm, minimum height=5cm] {$m_1$};
 \node[draw,outer sep=0pt,thick,fill = light-red,fill opacity=0.2, text opacity = 1] (M2) at (3,-2.5) [minimum width=.4cm, minimum height=2cm] {$m_2$};
 \node[draw,outer sep=0pt,thick,fill = light-red,fill opacity=0.2, text opacity = 1] (M3) at (6,0) [minimum width=.4cm, minimum height=4cm] {$m_3$};
 \node[draw,outer sep=0pt,thick,fill = light-red,fill opacity=0.2, text opacity = 1] (M4) at (9,0) [minimum width=.4cm, minimum height=2cm] {$m_4$};
 \draw[spring] ($(wall.east) + (.35,2.7)$) -- ($(M1.west) + (0,.5)$) node [midway,above] {$k_1$};
 \draw[spring] ($(M1.east) + (0,-2)$) -- ($(M2.west) + (0,.5)$) node [midway,above] {$k_2$};
 \draw[spring] ($(M2.east) + (0,.5)$) -- ($(M3.west) + (0,-2)$) node [midway,above] {$k_3$};
 \draw[spring] ($(M3.east) + (0,.5)$) -- ($(M4.west) + (0,.5)$) node [midway,above] {$k_4$};
 \draw[spring] ($(M1.east) + (0,2.5)$) -- ($(M3.west) + (0,2.5)$) node [midway,above] {$k_5$};
 \draw[damper] ($(wall.east) + (.35,1.2)$) -- ($(M1.west) + (0,-1)$) node [midway,below,yshift=-2mm] {$d_1$};
 \draw[damper] ($(M1.east) + (0,1.7)$) -- ($(M3.west) + (0,1.7)$) node [midway,below,yshift=-2mm] {$d_2$};
 \draw[damper] ($(M3.east) + (0,-.5)$) -- ($(M4.west) + (0,-.5)$) node [midway,below,yshift=-2mm] {$d_3$};
 \draw[thick, dashed] ($(M1.north)$) -- ($(M1.north) + (0,.5)$);
 \draw[thick, -latex] ($(M1.north) + (0,0.25)$) -- ($(M1.north) + (1,0.25)$) node [midway, above] {$f_1(t)$};
 \draw[thick, dashed] ($(M2.north)$) -- ($(M2.north) + (0,.5)$);
 \draw[thick, -latex] ($(M2.north) + (0,0.25)$) -- ($(M2.north) + (1,0.25)$) node [midway, above] {$f_2(t)$};
 \draw[thick, dashed] ($(M3.north)$) -- ($(M3.north) + (0,.5)$);
 \draw[thick, -latex] ($(M3.north) + (0,0.25)$) -- ($(M3.north) + (1,0.25)$) node [midway, above] {$f_3(t)$};
 \draw[thick, dashed] ($(M4.north)$) -- ($(M4.north) + (0,.5)$);
 \draw[thick, -latex] ($(M4.north) + (0,0.25)$) -- ($(M4.north) + (1,0.25)$) node [midway, above] {$f_4(t)$};
 \draw[thick, dashed] ($(M1.south)$) -- ($(M1.south) + (0,-2)$);
 \draw[thick, -latex] ($(M1.south) + (0,-1.75)$) -- ($(M1.south) + (1,-1.75)$) node [midway, below] {$x_1$};
 \draw[thick, dashed] ($(M2.south)$) -- ($(M2.south) + (0,-1.5)$);
 \draw[thick, -latex] ($(M2.south) + (0,-1.25)$) -- ($(M2.south) + (1,-1.25)$) node [midway, below] {$x_2$};
 \draw[thick, dashed] ($(M3.south)$) -- ($(M3.south) + (0,-2.5)$);
 \draw[thick, -latex] ($(M3.south) + (0,-2.25)$) -- ($(M3.south) + (1,-2.25)$) node [midway, below] {$x_3$};
 \draw[thick, dashed] ($(M4.south)$) -- ($(M4.south) + (0,-3.5)$);
 \draw[thick, -latex] ($(M4.south) + (0,-3.25)$) -- ($(M4.south) + (1,-3.25)$) node [midway, below] {$x_4$};
\end{tikzpicture}
\end{center}
\caption{A four-degree-of-freedom mass-spring system.}
\label{fig:systemofmassesandsprings}
\end{figure}
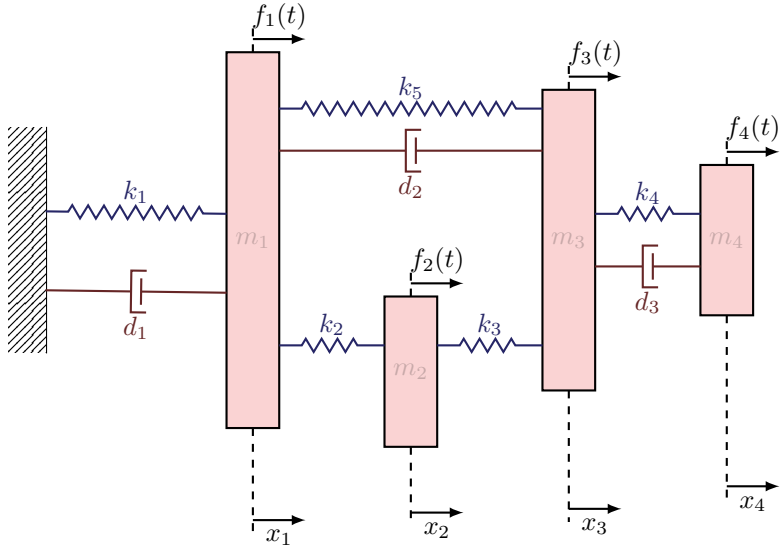

Consider the corresponding system in Eq.~\eqref{eq1} together with matrices in Eq.~\eqref{matricesdeekay}. The graphs of $K$ and $D$ are, respectively, $G$ and $H$ in Figure \ref{fig:graphsofmatricesdeeandkay}. Note that the two edges of graph $H$ correspond to the two dampers between the masses (that is, dampers $d_2$ and $d_3$), and the four edges of $G$ correspond to the springs between the masses (with constants $k_2$, \ldots, $k_5$) in Figure \ref{fig:systemofmassesandsprings}. In contrast, $d_1$ and $k_1$ contribute to just one  diagonal entry of $L(z)$.       \qed
\end{example}   

Using the ideas developed above we study the following more general problem:

\noindent{\bf A Structured Inverse Quadratic Problem:}\\ 
For a given set of $2n$ real numbers, $\Lambda$, and given graphs $G$ and $H$ on $n$ vertices, do there exist {\em real symmetric} matrices $M, D, K \in \mathbb{R}^{n \times n}$ such that the set of proper values of $L(z) = M z^2 + D z + K$ is $\Lambda$, $M$ is diagonal and positive definite, the graph of $D$ is $H$, and the graph of $K$ is $G$? (Note, in particular, that the constructed systems are to have {\em entirely real spectrum}.)
\medskip

More generally, we study problems of this kind of higher degree - culminating in Theorem \ref{thm:main}. A partial answer to the ``quadratic'' problem is provided in Corollary \ref{lcor}. In particular, it will be shown that a solution exists when the given proper values are all distinct. The strategy is to start with a diagonal matrix polynomial with the given proper values, and then perturb the off diagonal entries of the coefficient matrices so that they realize the given graph structure. In doing so the proper values change. Then we argue that there is an adjustment of the diagonal entries so that the resulting matrix polynomial has the given proper values. The last step involves using the implicit function theorem. Consequently, all the perturbations are small and the resulting matrix is \textit{close} to a diagonal matrix. We solve the problem for matrix polynomials of general degree, $k$, and the quadratic problem is the special case $k=2$.

The authors of \cite{cdy07} deal with an inverse problem in which the graphs $G$ and $H$ are \emph{paths}. That is, the corresponding matrices to be reconstructed are \emph{tridiagonal} matrices where the superdiagonal and subdiagonal entries are nonzero as in Example \ref{ex:linearsystem} (but not Example \ref{ex:firstgraph}). In this particular problem only a few proper values and their corresponding proper vectors are given. For more general graphs, it is argued that ``the issue of solvability is problem dependent and has to be addressed structure by structure.'' This case, in which the graphs of the matrices are arbitrary and only a few proper values and their corresponding proper vectors are given, is considered in \cite{dlc09,ldc10-1,ldc10-2}.

\section{The higher degree problem} \label{sec:hd}

The machinery required for the solution of our inverse quadratic problems is readily extended for use in the context of problems of higher degree. So we now focus on polynomials $A(z)$ of general degree $k \geq 1$ with
$A_0, A_1, \ldots, A_k \in \mathbb{R}^{n \times n}$ and symmetric. With $z \in \mathbb{C}$, the polynomials have the form
	\begin{equation} \label{eqa} 
		A(z) := A_k z^k + \cdots + A_1 z + A_0,\;\;\;\;A_k \ne 0,  
	\end{equation}
and we write
	\begin{equation} \label{eqaderivative} 
		A^{(1)}(z) = k A_k z^{k-1} + \cdots + 2 A_2 z + A_1.
	\end{equation}
Since $A_k \neq 0$, the matrix polynomial $A(z)$ is said to have \emph{degree} $k$. If $\det A(z)$ has an isolated zero at $z_0$ of multiplicity $m$, then $z_0$ is a proper value of $A(z)$ of \emph{algebraic multiplicity} $m$. A proper value with $m = 1$ is said to be \emph{simple}. 
	
	If $z_0$ is a proper value of $A(z)$ and the null space of $A(z_0)$ has dimension $r$, then $z_0$ is a proper value of $A(z)$ of \emph{geometric multiplicity} $r$. If $z_0$ is a proper value of $A(z)$ and its algebraic and geometric multiplicities agree, then the proper value $z_0$ is said to be \emph{semisimple}.

We assume that {\em all} the proper values and graph structures associated with $A_0,\ldots,A_k$ are given (as in Eq.~\eqref{tridiagonalmatrices}, where $k=2$). We are concerned only with the solvability of the problem. In particular, we show that {\em when all the proper values are real and simple, the structured inverse quadratic problem is solvable for any given graph-structure}. The constructed matrices, $A_0,A_1,\ldots,A_k$, will then be real and symmetric. More generally, our approach shows the existence of an \emph{open} set of solutions for polynomials of {\em any degree} and the important quadratic problem (illustrated above) is a special case. Consequently, this shows that the solution is not unique.

The techniques used here are generalizations of those appearing in \cite{hs13}, where the authors show the existence of a solution for the {\em linear} structured inverse eigenvalue problem. A different generalization of these techniques is used in \cite{h17} to solve the {\em linear} problem when the solution matrix is not necessarily symmetric, and this admits complex conjugate pairs of eigenvalues. 

First consider a {\em diagonal} matrix polynomial with some given proper values. The graph of each (diagonal) coefficient of the matrix polynomial is, of course, a graph with vertices but no edges (an empty graph). We suppose that such a graph is assigned for each coefficient. We perturb the off-diagonal entries (corresponding to the edges of the graphs) to nonzero numbers in such a way that the new matrix polynomial has given graphs (as with $G$ and $H$ in Examples \ref{ex:linearsystem} and \ref{ex:firstgraph}). Of course, this will change the proper values of the matrix polynomial. Then we use the implicit function theorem to show that if the perturbations of the diagonal system are small, the diagonal entries can be adjusted so that the resulting matrix polynomial has the same proper values as the unperturbed diagonal system. 

In order to use the implicit function theorem, we need to compute the derivatives of a proper value of a matrix polynomial with respect to perturbations of one entry of one of the coefficient matrices. That will be done in this section. Then, in Section \ref{sec:diagonalpert}, we construct a diagonal matrix polynomial with given proper values and show that a function that maps matrix polynomials to their proper values has a nonsingular Jacobian at this diagonal matrix. In Section \ref{sec:existencetheorem}, the implicit function theorem is used to establish the existence of a solution for the structured inverse problem.

\subsection{Symmetric perturbations of diagonal systems}
	
Now let us focus on matrix polynomials $A(z)$  of degree $k$ with {\em real} and {\em diagonal} coefficients. The next lemma provides the derivative of a simple proper value of $A(z)$ when the diagonal $A(z)$ is subjected to a {\em real symmetric} perturbation. Thus, we consider
\begin{equation}  \label{eq.cab} C(z,t):=A(z) + tB(z)   \end{equation}
where $t\in \mathbb{R}$, $|t| < \varepsilon$ for some $\varepsilon > 0$, and 
\begin{equation} B(z) = B_k z^k +  B_{k-1}z^{k-1} + \cdots + B_1z + B_0 \end{equation} 
with $B_s^T=B_s \in \mathbb{R}^{n\times n}$ for $s=0,1,2,\ldots,k$. 

Let us denote the derivative of a variable $c$ with respect to the perturbation parameter $t$ by $\dot{c}$. Also, let $\bm e_r \in \mathbb{R}^n$ be the $r$th column of the identity matrix (i.e. it has a $1$ in the $r$th position and zeros elsewhere). The following lemma is well-known. A proof is provided for expository purposes.

	\begin{lemma}[See Lemma 1 of \cite{lmz03}] \label{lem}
		Let $k$ and $n$ be fixed positive integers and let $A(z)$ in Eq.~\eqref{eqa} have real, diagonal, coefficients and a simple proper value $z_0$. Let $z(t)$ be the unique (necessarily simple) proper value of $C(z,t)$ in Eq.~\eqref{eq.cab} for which $z(t) \to z_0$ as $t \to 0$. Then there is an $r \in \{1,2,\ldots, n\}$ for which 
		\begin{equation} \label{eq:lba}
	\dot{z}(0) = -\frac{ \left( B(z_0) \right)_{rr}}{\left( A^{(1)}(z_0) \right)_{rr}}.
		\end{equation}
	\end{lemma}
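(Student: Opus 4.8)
The plan is to exploit the fact that the coefficients of $A(z)$ are \emph{diagonal}, so the proper values of $A(z)$ are simply the zeros of the scalar polynomials $a_{jj}(z) := (A_k)_{jj}z^k + \cdots + (A_0)_{jj}$ for $j = 1,\ldots,n$, i.e. $\det A(z) = \prod_{j=1}^n a_{jj}(z)$. Since $z_0$ is a \emph{simple} proper value of $A(z)$, it is a zero of exactly one of these scalar polynomials, say $a_{rr}(z)$, and $a_{rr}'(z_0) \neq 0$; moreover $a_{jj}(z_0) \neq 0$ for $j \neq r$. This $r$ is the index whose existence the lemma asserts. The perturbed matrix polynomial $C(z,t) = A(z) + tB(z)$ is no longer diagonal, but near $(z_0,0)$ the relevant scalar equation governing the branch $z(t)$ can be isolated.

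The key steps, in order: (i) Define $g(z,t) := \det C(z,t)$, an analytic (indeed polynomial) function of $(z,t)$ with $g(z_0,0) = 0$. Compute $\partial g/\partial z$ at $(z_0,0)$: since $A(z_0)$ is diagonal with a single zero entry (in position $r$) and the rest nonzero, differentiating the determinant and using that only the $r$th diagonal factor vanishes gives $\partial g/\partial z\,(z_0,0) = a_{rr}'(z_0)\prod_{j\neq r} a_{jj}(z_0) = \big(A^{(1)}(z_0)\big)_{rr}\prod_{j \neq r} a_{jj}(z_0) \neq 0$. Hence the implicit function theorem applies and yields the unique analytic branch $z(t)$ with $z(0) = z_0$, which must coincide with the proper value described in the statement. (ii) Compute $\partial g/\partial t$ at $(z_0,0)$: expanding $\det(A(z_0) + tB(z_0))$ to first order in $t$, the only surviving term is the one replacing the zero $r$th diagonal entry of $A(z_0)$ by $t\,(B(z_0))_{rr}$, giving $\partial g/\partial t\,(z_0,0) = \big(B(z_0)\big)_{rr}\prod_{j\neq r} a_{jj}(z_0)$. (iii) Apply the implicit function theorem formula
\begin{equation}
\dot z(0) = -\frac{\partial g/\partial t\,(z_0,0)}{\partial g/\partial z\,(z_0,0)} = -\frac{\big(B(z_0)\big)_{rr}\prod_{j\neq r} a_{jj}(z_0)}{\big(A^{(1)}(z_0)\big)_{rr}\prod_{j\neq r} a_{jj}(z_0)} = -\frac{\big(B(z_0)\big)_{rr}}{\big(A^{(1)}(z_0)\big)_{rr}},
\end{equation}
where the common nonzero product cancels. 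This is exactly Eq.~\eqref{eq:lba}.

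The only delicate point is step (i)/(ii): justifying that when one differentiates $\det(A(z_0) + tB(z_0))$ with respect to $t$ at $t=0$, or $\det C(z,t)$ with respect to $z$ at $z=z_0$, the \emph{only} nonvanishing contribution comes from the $(r,r)$ cofactor. This follows cleanly from Jacobi's formula $\frac{d}{ds}\det X(s) = \operatorname{tr}\!\big(\operatorname{adj}(X(s))\,X'(s)\big)$ together with the observation that $\operatorname{adj}(A(z_0))$, the adjugate of a diagonal matrix with exactly one zero on the diagonal, is itself diagonal with its only nonzero entry in position $(r,r)$, equal to $\prod_{j\neq r} a_{jj}(z_0)$. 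I expect this cofactor/adjugate bookkeeping to be the main (though modest) obstacle; everything else is a direct invocation of the implicit function theorem. An alternative, avoiding determinants, is to track the eigenvector: since $A(z_0)\bm e_r = \bm 0$ and $A(z_0)^T = A(z_0)$, left- and right-multiplying the identity $C(z(t),t)\bm x(t) = \bm 0$ by $\bm e_r^T$ and differentiating at $t=0$ gives $\bm e_r^T\big(A^{(1)}(z_0)\dot z(0) + B(z_0)\big)\bm e_r = 0$, which rearranges to Eq.~\eqref{eq:lba} immediately; I would mention this as the quicker route and use it as the actual proof, keeping the determinant computation as motivation.
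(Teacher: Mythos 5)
Your proposal is correct, and in fact it contains two valid arguments. The one you designate at the end as ``the actual proof'' --- differentiating $C(z(t),t)\bm v(t)=\bm 0$ at $t=0$ and left-multiplying by $\bm e_r^\top$, using that $\bm e_r^\top A(z_0)=\bm 0^\top$ --- is exactly the paper's proof; the paper, like you, borrows from Lemma 1 of \cite{lmz03} the existence of the analytic branch $z(t)$ and of a differentiable proper vector $\bm v(t)\to\bm e_r$. The determinant argument that occupies most of your write-up is a genuinely different route: applying the implicit function theorem directly to $g(z,t)=\det C(z,t)$ and computing $\partial g/\partial z$ and $\partial g/\partial t$ at $(z_0,0)$ via Jacobi's formula, with $\operatorname{adj}(A(z_0))=\bigl(\prod_{j\ne r}a_{jj}(z_0)\bigr)\,\bm e_r\bm e_r^\top$. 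Your adjugate bookkeeping is right, and the common factor $\prod_{j\ne r}a_{jj}(z_0)$ cancels as you claim. What this buys you is self-containedness: it constructs the index $r$ explicitly (the unique diagonal entry vanishing at $z_0$), produces the analytic branch $z(t)$ from scratch, and entirely avoids the question of differentiability of the proper \emph{vector}, which is the one nontrivial ingredient the paper outsources to \cite{lmz03}. The eigenvector route is shorter but rests on that cited fact. Either version is acceptable; if you keep the eigenvector version, note that symmetry of $A(z_0)$ is not even needed to see $\bm e_r^\top A(z_0)=\bm 0^\top$ --- it is immediate from $A(z_0)$ being diagonal with $a_{rr}(z_0)=0$.
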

	\begin{proof}
First observe that, because $z_0$ is a {\em simple} proper value of $A(z)$, there exists an analytic function of proper values $z(t)$ for $C(z,t)$ defined on a neighbourhood of $t=0$ for which $z(t)\rightarrow z_0$ as $t\rightarrow 0$. Furthermore, there is a corresponding differentiable proper vector $\bm v(t)$ of $C(z,t)$ for which $\bm v(t) \to \bm e_r$ for some $r=1,2,\ldots,n$, as $t \to 0$ (See Lemma 1 of \cite{lmz03}, for example). Thus, in a neighbourhood of $t=0$ we have
		\begin{equation}\label{eq:evalofc}
			C(z(t),t) \bm v(t) = \big( A(z)+tB(z)\big) \bm v(t) = \bm 0.
		\end{equation}
Then observe that
		\begin{align*}
			\frac{\rm d}{{\rm d}t} \left( z^j(t) (A_j + t B_j) \right) \at_{t = 0} 
			& = j z^{j-1}(t) \dot{z}(t) (A_j + t B_j) + z^j(t) B_j \at_{t = 0} \\
			& = j z_0^{j-1} \dot{z}(0) A_j + z_0^j B_j.
		\end{align*}
Thus, taking the first derivative of Eq.~\eqref{eq:evalofc} with respect to $t$ and then setting $t = 0$ we have $\bm v(0)=e_r$ and
		\begin{equation} \label{eq:derivativeofc}
			\left( (A^{(1)}(z_0) \dot{z}(0) + B(z_0) \right) \bm e_r 
			+
			A(z_0) \dot{\bm v}(0) = \bm 0.
		\end{equation}
Multiply by $\bm e_r^\top$ from the left to get
		\begin{equation}\label{eq:ff}
			\bm e_r^\top A^{(1)}(z_0) \dot{z}(0) \bm e_r 
			+
			\bm e_r^\top B(z_0) \bm e_r
			+
			\bm e_r^\top A(z_0) \dot{\bm v}(0) = 0.
		\end{equation}
But $\bm e_r^\top$ is a left proper vector of $A(z_0)$ corresponding to the proper value $z_0$. Thus, $\bm e_r^\top A(z_0) = \bm 0^\top$, and (\ref{eq:lba}) follows from (\ref{eq:ff}).
\end{proof}

Now we can calculate the changes in a simple proper value of $A(z)$ when an entry of just one of the coefficients, $A_s,$ is perturbed -- while maintaining symmetry. 
\begin{definition} \label{def:eeeyejay}
	For $1 \le i,j \le n$, define the symmetric $n\times n$ matrices $E_{ij}$ with:\\
	(a) exactly one nonzero entry, $e_{ii}=1$, when $j=i$, and \\
	(b) exactly two nonzero entries, $e_{ij}=e_{ji}=1$, when $j \ne i$. 
\end{definition} 

We perturb certain entries of $A(z)$ in Eq.~\eqref{eqa} (maintaining symmetry) by applying Lemma \ref{lem} with $B(z)=z^m E_{ij}$ to obtain:

	\begin{corollary}\label{cor:standardunitevec}
		Let $A(z)$ in Eq.~$\eqref{eqa}$ be diagonal with a simple proper value $z_0$ and corresponding unit proper vector $\bm e_r$. Let $z_m(t)$ be the proper value of the perturbed system $A(z)+t(z^mE_{ij})$, for some $i, j \in \{1,2,\ldots,n\}$, that approaches $z_0$ as $t \to 0$.
		Then
	\begin{equation} 
		\dot{z}_m(0) = 
			\begin{cases}
		\dfrac{-z_0^m}{\left( A^{(1)}(z_0) \right)_{rr}} & \text{when}\; \; r = i = j,\\
				0 &  \text{when}\; i \ne j.
			\end{cases}
	\end{equation}
	\end{corollary}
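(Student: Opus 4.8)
The plan is to apply Lemma~\ref{lem} directly with the specific choice $B(z) = z^m E_{ij}$ and read off the diagonal entry $\left(B(z_0)\right)_{rr}$ that appears in the numerator of Eq.~\eqref{eq:lba}. Since $A(z)$ is diagonal with simple proper value $z_0$, the corresponding proper vector is $\bm e_r$ for some $r \in \{1,\ldots,n\}$ (this is exactly the hypothesis we are given, and it is also what the lemma's proof establishes). The perturbation $B(z) = z^m E_{ij}$ is symmetric with $B_m = E_{ij}$ and all other $B_s = 0$, so Lemma~\ref{lem} applies and gives $\dot{z}_m(0) = -\left(B(z_0)\right)_{rr}/\left(A^{(1)}(z_0)\right)_{rr} = -z_0^m (E_{ij})_{rr} / \left(A^{(1)}(z_0)\right)_{rr}$.

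The only remaining work is to evaluate $(E_{ij})_{rr}$ according to Definition~\ref{def:eeeyejay}. First I would dispose of the case $i \neq j$: then $E_{ij}$ has its only nonzero entries in positions $(i,j)$ and $(j,i)$, both off-diagonal, so every diagonal entry of $E_{ij}$ is zero, hence $(E_{ij})_{rr} = 0$ regardless of $r$, and $\dot{z}_m(0) = 0$. Next the case $i = j$: here $E_{ij} = E_{ii}$ has a single nonzero entry $e_{ii} = 1$, so $(E_{ii})_{rr} = 1$ when $r = i$ and $(E_{ii})_{rr} = 0$ when $r \neq i$. Combining, when $r = i = j$ we get $\dot{z}_m(0) = -z_0^m / \left(A^{(1)}(z_0)\right)_{rr}$, and in all other subcases (whether $i \neq j$, or $i = j \neq r$) the derivative vanishes. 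This matches the stated formula, since the nonzero case listed is precisely $r = i = j$ and everything else is subsumed under "$0$".

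There is essentially no obstacle here: the corollary is a mechanical specialization of Lemma~\ref{lem}. The one point worth a sentence of care is confirming that the hypotheses of the lemma are genuinely met --- namely that $z^m E_{ij}$ is a legitimate choice of $B(z)$ with symmetric real coefficients (it is, trivially) and that $z_0$ remains simple so that the analytic branch $z_m(t)$ with $z_m(0) = z_0$ is well-defined (this is inherited from the simplicity assumption on $A(z)$, for $|t|$ small). I would state these checks briefly and then present the two-case computation of $(E_{ij})_{rr}$ as above.
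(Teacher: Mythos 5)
Your proposal is correct and is exactly the route the paper takes: the corollary is obtained by applying Lemma~\ref{lem} with $B(z)=z^m E_{ij}$ and evaluating $\left(B(z_0)\right)_{rr}=z_0^m\left(E_{ij}\right)_{rr}$ via Definition~\ref{def:eeeyejay}. Your additional remarks on the subcase $i=j\neq r$ and on verifying the hypotheses of the lemma are accurate but not different in substance from the paper's one-line derivation.
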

	Note also that, when we perturb {\em off-diagonal} entries of the diagonal matrix function 
$A(z)$ in Eq.~$\eqref{eqa}$, we obtain $\dot{z}_m(0) = 0$.

\section{A special diagonal matrix polynomial} \label{sec:diagonalpert}

\subsection{Construction}
We construct an $n\times n$ real diagonal matrix polynomial $A(z)$ of degree $k$, with given real proper values $\lambda_1,\lambda_2,\ldots,\lambda_{nk}$. Then (see Eq.~\eqref{functioneff}) we define a function $f$ that maps the entries of $A(z)$ to its proper values and show that the Jacobian of $f$ when evaluated at the constructed $A(z)$ is nonsingular. This construction prepares us for use of the implicit function theorem in the proof of the main result in the next section.

\textbf{Step 1:} Let $[k]_r$ denote the sequence of $k$ integers $\{ (r-1)k+1, (r-1)k+2, \ldots, rk \}$, for $r=1,2,\ldots,n$. Thus, $[k]_1 = \{ 1,2,\ldots, k \}$, $[k]_2 = \{ k+1, k+2, \ldots, 2k \}$, and $[k]_n = \{ (n-1)k+1,(n-1)k+2, \ldots, nk \}$. We are to define an $n\times n$ diagonal matrix polynomial $A(z)$ where, for $i=1,2,\ldots,n,$ the zeros of the $i$-th diagonal entry are exactly those proper values $\lambda_q$ of $A(z)$ with $q \in [k]_i$. 

\textbf{Step 2:} Let $\alpha_{k,1},\ldots,\alpha_{k,n}$ be assigned positive numbers. 
We use these numbers to define the $n$ diagonal entries for each of $k$ diagonal matrix polynomials (of size $n\times n$). Then, for $s = 0,1,\ldots,k-1$, and $t=1,2,\ldots, n$ we define 
\begin{equation} \label{eq:wow}
	\alpha_{s,t} = (-1)^{k-s} \alpha_{k,t} \sum_{\substack{Q \subseteq [k]_t  \\ |Q| = k-s }} \prod_{q \in Q} \lambda_q. \;\;\;\; 
\end{equation}
Thus, the summation is over all subsets of size $k-s$ of the set of integers $[k]_t$. 

Now define
\begin{equation} \label{eq:vic}
	A_s: = \left[ \begin{array}{cccc}
	\alpha_{s,1} & 0 & \cdots & 0\\
	0 & \alpha_{s,2} & \cdots & 0\\
	\vdots & \vdots & \ddots & \vdots \\
	0 & \cdots & 0 & \alpha_{s,n}
	\end{array}\right] \text{ for } s = 0,1,\ldots,k, 
\end{equation}
and the diagonal matrix polynomial
\begin{equation}
	A(z): = \sum_{s=0}^{k} A_s z^s.
\end{equation}
Using (\ref{eq:wow}) and the fact that $\alpha_{k,j} \neq 0$ for each $j$, we see that
\begin{equation} \label{diagonalexample}
	A(z) = \left[ \begin{array}{cccc}
	\alpha_{k,1} \displaystyle\prod_{q\in [k]_1} (z-\lambda_q)  & 0 & \cdots & 0 \\
	0 & \alpha_{k,2} \displaystyle\prod_{q\in [k]_2} (z-\lambda_q) & \cdots & 0 \\
	\vdots & \vdots & \ddots & \vdots \\
	0 & 0 & \cdots & \alpha_{k,n} \displaystyle\prod_{q\in [k]_n} (z-\lambda_q) 
	\end{array}\right]
\end{equation}
has degree $k$, and the assigned proper values are $\lambda_1,\lambda_2,\ldots,\lambda_{nk}$.  Note that the proper vector corresponding to $\lambda_q$ is $e_r$ for $q \in [k]_r$. This completes our construction.

In the following theorem we use Corollary \ref{cor:standardunitevec} to examine perturbations of \emph{either} a diagonal entry $(i,i)$ of $A(z)$ in Eq.~\eqref{diagonalexample}, \emph{or} two of the (zero) off-diagonal entries, $(i,j)$ and $(j,i)$, of $A(z)$. 

\begin{theorem} \label{lem:derivativewrtoffdiagonals}
	Let $\lambda_1,\lambda_2,\ldots,\lambda_{nk}$ be $nk$ distinct real numbers, and let $A(z)$ be defined as in Eq.~\eqref{diagonalexample}. For a fixed $m \in \{ 0,1, \ldots, k-1\}$ and with $E_{ij}$ as in Definition $\ref{def:eeeyejay}$, define 
\begin{align*}
	P_m^{i,j}(z,t) &= A(z) +  z^m t E_{ij}.
\end{align*}

If $1 \le q \le nk$, and $\lambda_{q,m}^{i,j}(t)$ is the proper value of $P_m^{i,j}(z,t)$ that tends to $\lambda_q$ as $t \rightarrow 0$, then 
\begin{equation}\label{derivativeatij}
	\left( \dfrac{\partial \lambda_{q,m}^{i,j}(t)}{\partial t} \right)_{t=0} = \begin{cases}
		\dfrac{- \lambda_q^m}{A^{(1)} (\lambda_q)_{rr}}, &\text{ if } i = j = r \text{ and } q \in [k]_r,\\
		0, &\text{otherwise}.
	\end{cases}
\end{equation} 
\end{theorem}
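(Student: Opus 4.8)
The plan is to reduce the statement directly to Corollary \ref{cor:standardunitevec} by matching the setup precisely. Recall that $A(z)$ in Eq.~\eqref{diagonalexample} is diagonal, and since $\lambda_1,\ldots,\lambda_{nk}$ are distinct, every proper value $\lambda_q$ is simple (its diagonal block $\alpha_{k,r}\prod_{p\in[k]_r}(z-\lambda_p)$ has $\lambda_q$ as a simple root whenever $q\in[k]_r$, and no other diagonal entry vanishes at $\lambda_q$). Moreover, as noted at the end of the construction, the proper vector attached to $\lambda_q$ with $q\in[k]_r$ is exactly the standard basis vector $\bm e_r$. So the hypotheses of Corollary \ref{cor:standardunitevec} hold with $z_0=\lambda_q$ and the corresponding unit proper vector $\bm e_r$.

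The next step is to identify the perturbation. We have $P_m^{i,j}(z,t) = A(z) + t\,(z^m E_{ij})$, which is exactly the perturbed system $A(z) + t(z^m E_{ij})$ appearing in Corollary \ref{cor:standardunitevec}, with the fixed exponent $m$ playing the role of the corollary's $m$. The function $\lambda_{q,m}^{i,j}(t)$ is defined as the proper value of $P_m^{i,j}(z,t)$ tending to $\lambda_q$ as $t\to0$; this is precisely the function the corollary calls $z_m(t)$. Simplicity of $\lambda_q$ guarantees that such an analytic branch exists and is unique near $t=0$, so $\lambda_{q,m}^{i,j}(t)$ is well-defined and $\bigl(\partial\lambda_{q,m}^{i,j}/\partial t\bigr)_{t=0} = \dot z_m(0)$.

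Applying Corollary \ref{cor:standardunitevec} now gives the result immediately: $\dot z_m(0) = -\lambda_q^m / \bigl(A^{(1)}(\lambda_q)\bigr)_{rr}$ when $r=i=j$, and $\dot z_m(0)=0$ when $i\ne j$. The only case not literally covered by the two branches printed in the corollary is when $i=j$ but $i\ne r$: here the corollary's first branch requires $r=i=j$, so we fall into its ``$i\ne j$''-style conclusion only implicitly. To handle this cleanly I would invoke the remark following the corollary (perturbing a diagonal entry $(i,i)$ with $i\ne r$ is a perturbation that does not affect the $r$th diagonal entry, hence does not change the relevant proper value to first order), or equivalently re-derive it from Lemma \ref{lem}: there $B(z_0) = \lambda_q^m E_{ii}$, so $(B(z_0))_{rr} = \lambda_q^m (E_{ii})_{rr} = 0$ when $i\ne r$, giving $\dot z(0)=0$. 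Combining, the derivative is nonzero exactly when $i=j=r$ and $q\in[k]_r$, which is the ``otherwise'' clause in Eq.~\eqref{derivativeatij}.

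I do not expect a genuine obstacle here; the content of the theorem is essentially bookkeeping on top of Corollary \ref{cor:standardunitevec}. The one point demanding a little care is the case analysis on whether $i$ (equivalently $j$) equals the index $r$ of the proper vector associated with the particular $\lambda_q$ under consideration — the index $r$ depends on $q$ through $q\in[k]_r$, and conflating it with $i$ is the easy error to make. Writing the three sub-cases ($i\ne j$; $i=j\ne r$; $i=j=r$) explicitly and matching each to Lemma \ref{lem} or its corollary will make the argument watertight.
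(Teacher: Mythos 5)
Your proposal is correct and follows essentially the same route as the paper: the paper's proof also consists of noting that $\bigl(A^{(1)}(\lambda_q)\bigr)_{rr}\neq 0$ (so the denominator is well defined) and then citing Corollary \ref{cor:standardunitevec}. You are in fact a bit more careful than the paper in spelling out the sub-case $i=j\neq r$, which the corollary's printed branches do not literally cover; resolving it via $\bigl(B(\lambda_q)\bigr)_{rr}=\lambda_q^m(E_{ii})_{rr}=0$ in Lemma \ref{lem} is exactly the right fix.
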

\begin{proof}
It follows from the definition in Eq.~\eqref{diagonalexample} that $\det A^{(1)}(\lambda_q) \neq 0$ for all $q = 1,2,\ldots,nk$. That is, $ A^{(1)}(\lambda_q)_{rr} \neq 0$, for $r=1,2\ldots,n$. Then Eq.~\eqref{derivativeatij} follows from Corollary \ref{cor:standardunitevec}.
\end{proof}

\subsection{The role of graphs}
We are going to construct matrices with variable entries, in order to adapt Corollary \ref{cor:standardunitevec} to the case when the entries of the $n\times n$ diagonal matrix $A$ in Eq. \eqref{diagonalexample} are independent variables. A small example of such a matrix appears in Example \ref{ex:firstgraph}. 

Let $G_0,G_1,\cdots,G_{k-1}$ be $k$ graphs on $n$ vertices and, for $0 \le s \le {k-1}$, let $G_s$ have $m_s$ edges $\{ i_\ell, j_\ell \}_{\ell=1}^{m_s}$ ($k=2$ and $n=4$ in Example 2.2). Define $2k$ vectors (2 per graph):
	\begin{equation} 
		\bm x_s = (x_{s,1}, \ldots, x_{s,n}) \in \mathbb{R}^n,
		\;\;\;\;
		\bm y_s = (y_{s,1},\ldots,y_{s,m_s}) \in \mathbb{R}^{m_s},
                \;\;\;\;s=0,1, \ldots , k-1,
    \end{equation}
and let $m = m_0 + m_1 + \cdots + m_{k-1}$ be the total number of the edges of all $G_s$.
(See Figure \ref{fig:graphsofmatricesdeeandkay}, where $k=2$ and $n=4$.)

\begin{definition}\label{3.1} 
(The matrix of a graph - see Example \ref{ex:firstgraph}) For $s=0,1,\cdots,k-1$, let $M_s = M_s(\bm x_s, \bm y_s)$ be an $n\times n$ symmetric matrix whose diagonal $(i,i)$ entry is $x_{s,i}$, the off-diagonal $(i_\ell, j_\ell)$ and $(j_\ell, i_\ell)$ entries are $y_{s,\ell}$ where $\{x_{i_{\ell}},x_{j_{\ell}}\}$ are edges of the graph $G_s$, and all other entries are zeros. We say that $M_s$ is the matrix of the graph $G_s$.
\end{definition}

Now let $A_k$ be the $n\times n$ diagonal matrix in Eq.~\eqref{eq:vic} (the leading coefficient of $A(z)$) and, using Definition \ref{3.1}, define the $n\times n$ matrix polynomial   
\begin{equation}\label{eq.mm}
	M = M(z, \bm x, \bm y) := z^k A_k + \sum_{s=0}^{k-1} z^s M_s(\bm x_s, \bm y_s), 
\end{equation}
where $\bm x = (\bm x_0, \ldots, \bm x_{k-1}) \in \mathbb{R}^{kn}$ and $\bm y = (\bm y_0, \ldots, \bm y_{k-1}) \in \mathbb{R}^{km_s}$. Thus, the coefficients of the matrix polynomial $M(z,\bm x, \bm y)$ are defined in terms of $k$ graphs, $G_s$, each having $n$ vertices and $m_s$ edges, for $s = 0,1,\ldots,k-1$. Note that, with the definition of the diagonal matrix polynomial $A(z)$ in (\ref{diagonalexample}), we have
\begin{equation}\label{matrixemmata}
	A(z) = M(z, \bm \alpha_0, \bm \alpha_1, \ldots, \bm \alpha_{k-1}, \bm 0, \bm 0, \ldots, \bm 0),
\end{equation}
where $\bm \alpha_s = (\alpha_{s,1}, \alpha_{s,2}, \ldots, \alpha_{s,n})$, for each $s = 0,1,\ldots,k-1$.

Recall that the strategy is to 
\begin{enumerate}
	\item[a)] perturb those {\em off-diagonal} (zero) entries of the diagonal matrix $A(z)$ in Eq.~\eqref{diagonalexample} that correspond to edges in the given graphs $G_s$ to small nonzero numbers, and then \item[b)] adjust the {\em diagonal} entries of the new matrix so that the proper values of the final matrix coincide with those of $A(z)$. 
\end{enumerate}
In order to do so, we keep track of the proper values of the matrix polynomial $M$ in Eq.~\eqref{eq.mm} by defining the following function:
\begin{align} \label{functioneff}
	f \colon \mathbb{R}^{kn+m} & \to \mathbb{R}^{kn} \nonumber\\
	(\bm x, \bm y) & \mapsto \left( \lambda_1(M), \lambda_2(M), \ldots, \lambda_{kn}(M) \right),
\end{align}
where $\lambda_q(M)$ is the $q$-th smallest proper value of $M(z, \bm x, \bm y)$. 

In order to show that, after small perturbations of the off-diagonal entries of $A(z)$, its proper values can be recovered by adjusting the diagonal entries, we will make use of a version of the implicit function theorem (stated below as Theorem \ref{IFT}). But in order to use the implicit function theorem, we will need to show that the Jacobian of the function $f$ in (\ref{functioneff}) is nonsingular at $A(z)$. 

Let $\jac_x(f)$ denote the submatrix of the Jacobian matrix of $f$ containing only the columns corresponding to the derivatives with respect to $x$ variables. Then $\jac_x(f)$ is 
\begin{equation}
\left[ \begin{array}{c|c|c}
\begin{array}{ccc}
\frac{\partial \lambda_1}{\partial x_{0,1}} & \cdots & \frac{\partial \lambda_1}{\partial x_{0,n}}\\
\vdots & \ddots & \vdots\\
\frac{\partial \lambda_k}{\partial x_{0,1}} & \cdots & \frac{\partial \lambda_k}{\partial x_{0,n}}
\end{array} & 
\begin{array}{ccc}
&&\\
&\cdots&\\
&&
\end{array}
& \begin{array}{ccc}
\frac{\partial \lambda_1}{\partial x_{k-1,1}} & \cdots & \frac{\partial \lambda_1}{\partial x_{k-1,n}}\\
\vdots & \ddots & \vdots\\
\frac{\partial \lambda_k}{\partial x_{k-1,1}} & \cdots & \frac{\partial \lambda_k}{\partial x_{k-1,n}}
\end{array}\\ \hline
&&\\
\vdots & \ddots & \vdots \\ 
&&\\\hline
\begin{array}{ccc}
\frac{\partial \lambda_{(n-1)k+1}}{\partial x_{0,1}} & \cdots & \frac{\partial \lambda_{(n-1)k+1}}{\partial x_{0,n}}\\
\vdots & \ddots & \vdots\\
\frac{\partial \lambda_{nk}}{\partial x_{0,1}} & \cdots & \frac{\partial \lambda_{nk}}{\partial x_{0,n}}
\end{array} & \cdots & \begin{array}{ccc}
\frac{\partial \lambda_{(n-1)k+1}}{\partial x_{k-1,1}} & \cdots & \frac{\partial \lambda_{(n-1)k+1}}{\partial x_{k-1,n}}\\
\vdots & \ddots & \vdots\\
\frac{\partial \lambda_{nk}}{\partial x_{k-1,1}} & \cdots & \frac{\partial \lambda_{nk}}{\partial x_{k-1,n}}
\end{array}
\end{array} \right],
\end{equation}
where each block is $k \times n$, and there are $n$ block rows and $k$ block columns. Note that, for example, the $(1,1)$ entry of $\jac_x(f)$ is the derivative of the smallest proper value of $M$ with respect to the variable in the $(1,1)$ position of $M_0$, and similarly the $(nk,nk)$ entry of $\jac_x(f)$ is the derivative of the largest proper value of $M$ with respect to the variable in the $(n,n)$ position of $M_{k-1}$. 

Then, using Theorem \ref{lem:derivativewrtoffdiagonals} we obtain:

\begin{corollary} \label{cor:paritalderivatives}
	Let $A(z)$ be defined as in Eq.~\eqref{diagonalexample}. Then
\begin{equation}
	\dfrac{\partial \lambda_{q}}{\partial x_{s,r}} \at_{A(z)} = \begin{cases}
		\dfrac{- \lambda_q^s}{\left(A^{(1)} (\lambda_q)\right)_{rr}}, &\text{ if } q \in [k]_r,\\
		0, &\text{otherwise}.
	\end{cases}
\end{equation}
\end{corollary}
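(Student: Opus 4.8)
The plan is to translate the partial-derivative statement into the language of the single-entry perturbations already analyzed in Theorem \ref{lem:derivativewrtoffdiagonals}. The key observation is that perturbing the scalar variable $x_{s,r}$ of the matrix polynomial $M(z,\bm x,\bm y)$ away from its value at the diagonal base point $A(z)$ is exactly the perturbation $P_s^{r,r}(z,t) = A(z) + z^s t E_{rr}$, because $x_{s,r}$ is the coefficient of $z^s$ in the $(r,r)$ diagonal entry of $M$ and $E_{rr}$ has its single nonzero entry in position $(r,r)$. Hence $\partial \lambda_q / \partial x_{s,r}$ evaluated at $A(z)$ is precisely $\bigl(\partial \lambda_{q,s}^{r,r}(t)/\partial t\bigr)_{t=0}$ in the notation of that theorem.

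First I would note that the function $f$ of Eq.~\eqref{functioneff} is well-defined and differentiable in a neighbourhood of the base point: since the $\lambda_q$ are distinct, the proper values of $A(z)$ are all simple, and by the discussion preceding Lemma \ref{lem} (the analytic perturbation theory cited from \cite{lmz03}) each proper value $\lambda_q(M)$ depends analytically on the entries of $M$ near $A(z)$, with $\lambda_q(A(z)) = \lambda_q$. Moreover $\det A^{(1)}(\lambda_q) = \prod_{r} (A^{(1)}(\lambda_q))_{rr} \neq 0$ for every $q$, as was verified in the proof of Theorem \ref{lem:derivativewrtoffdiagonals}, so the right-hand side of the claimed formula makes sense.

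Next I would apply Theorem \ref{lem:derivativewrtoffdiagonals} with $i = j = r$ and $m = s$. That theorem gives, for the proper value $\lambda_{q,s}^{r,r}(t)$ of $P_s^{r,r}(z,t)$ tending to $\lambda_q$,
\begin{equation*}
	\left( \dfrac{\partial \lambda_{q,s}^{r,r}(t)}{\partial t} \right)_{t=0} = \begin{cases}
		\dfrac{- \lambda_q^s}{(A^{(1)}(\lambda_q))_{rr}}, & \text{if } q \in [k]_r,\\
		0, & \text{otherwise}.
	\end{cases}
\end{equation*}
Identifying the one-parameter curve $t \mapsto \bm x$ with $x_{s,r}$ shifted by $t$ (all other coordinates of $\bm x$ and all of $\bm y$ held fixed at the base point) with the family $P_s^{r,r}(z,t)$, the chain rule yields $\partial \lambda_q / \partial x_{s,r}\bigr|_{A(z)} = (\partial \lambda_{q,s}^{r,r}/\partial t)_{t=0}$, which is exactly the asserted case distinction. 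Since $s$ ranges over $0,1,\ldots,k-1$ and $r$ over $1,\ldots,n$, this covers every column of $\jac_x(f)$, so the corollary follows.

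The only subtlety — and it is a bookkeeping point rather than a genuine obstacle — is making sure the index $q$ attached to $\lambda_q$ as "the $q$-th smallest proper value" in \eqref{functioneff} is consistent with the labelling used in Theorem \ref{lem:derivativewrtoffdiagonals}; since the $\lambda_q$ are distinct and the perturbations are continuous, the ordering is preserved for small $t$, so the branch $\lambda_{q,s}^{r,r}(t)$ that tends to $\lambda_q$ is indeed the $q$-th smallest proper value of the perturbed polynomial, and no reindexing is needed. With that remark in place the proof is essentially a one-line appeal to the preceding theorem.
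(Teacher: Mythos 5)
Your proposal is correct and follows essentially the same route as the paper: identify the variation of $x_{s,r}$ with the single-entry perturbation $P_s^{r,r}(z,t)=A(z)+z^s tE_{rr}$ and apply Theorem \ref{lem:derivativewrtoffdiagonals} with $i=j=r$ and $m=s$. Your additional remarks on differentiability and on the consistency of the labelling of the $q$-th smallest proper value are careful elaborations the paper leaves implicit, but they do not change the argument.
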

\begin{proof}
	Note that the derivative is taken with respect to $x_{s,r}$. That is, with respect to the $(r,r)$ entry of the coefficient of $z^s$. Thus, using the terminology of Theorem \ref{lem:derivativewrtoffdiagonals}, the perturbation to consider is $P_s^{rr}(z,t)$. Then 
	\begin{equation}
		\left( \dfrac{\partial \lambda_{q,s}^{r,r}(t)}{\partial t} \right)_{t=0} =  \begin{cases}
		\dfrac{- \lambda_q^s}{\left(A^{(1)} (\lambda_q)\right)_{rr}}, &\text{ if } q \in [k]_r,\\
		0, &\text{otherwise}.
	\end{cases}
	\end{equation}
\end{proof}

The main result of this section is as follows:
\begin{theorem}\label{thm:nonsigularjacobian}
	Let $A(z)$ be defined as in Eq.~\eqref{diagonalexample}, and $f$ be defined by Eq.~\eqref{functioneff}. Then
	$\jac_x(f) \at_{A(z)}$ is nonsingular.
\end{theorem}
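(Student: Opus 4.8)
The plan is to read off the entries of $\jac_x(f)\at_{A(z)}$ from Corollary~\ref{cor:paritalderivatives} and exploit the resulting sparsity pattern to block-diagonalize the matrix, reducing its determinant to a product of Vandermonde determinants. First I would record that the $(q,(s,r))$ entry of $\jac_x(f)\at_{A(z)}$ is exactly $\partial\lambda_q/\partial x_{s,r}\at_{A(z)}$, which by Corollary~\ref{cor:paritalderivatives} equals $-\lambda_q^{\,s}/(A^{(1)}(\lambda_q))_{rr}$ when $q\in[k]_r$ and vanishes otherwise. The crucial structural observation is that, for a fixed $r$, every nonzero entry in the columns $x_{0,r},x_{1,r},\ldots,x_{k-1,r}$ lies in a row $q\in[k]_r$, and conversely every nonzero entry in the rows indexed by $q\in[k]_r$ lies in one of those $k$ columns.

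Since the index sets $[k]_1,[k]_2,\ldots,[k]_n$ partition $\{1,2,\ldots,nk\}$ into $n$ consecutive blocks of size $k$, the rows of $\jac_x(f)\at_{A(z)}$ are already grouped by $r$, and a single permutation of the columns (gathering, for each $r$, the columns $x_{0,r},\ldots,x_{k-1,r}$ next to one another, in increasing order of $r$) brings the matrix into block-diagonal form with $n$ diagonal blocks $J_1,\ldots,J_n$, each of size $k\times k$. The block $J_r$ has rows indexed by $q\in[k]_r$ and columns indexed by $s\in\{0,1,\ldots,k-1\}$, with $(q,s)$ entry $-\lambda_q^{\,s}/(A^{(1)}(\lambda_q))_{rr}$.

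Next I would factor out of the $q$-th row of $J_r$ the scalar $-1/(A^{(1)}(\lambda_q))_{rr}$. As observed in the proof of Theorem~\ref{lem:derivativewrtoffdiagonals}, $\det A^{(1)}(\lambda_q)\neq 0$, and since $A^{(1)}(z)$ is diagonal this forces $(A^{(1)}(\lambda_q))_{rr}\neq 0$, so these factors are well-defined and nonzero. After this row scaling, $J_r$ becomes the $k\times k$ matrix with $(q,s)$ entry $\lambda_q^{\,s}$, i.e.\ a Vandermonde matrix built from the $k$ numbers $\{\lambda_q:q\in[k]_r\}$; since $\lambda_1,\ldots,\lambda_{nk}$ are distinct, so are these $k$ numbers, and the Vandermonde matrix is nonsingular. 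Hence $\det J_r\neq 0$ for each $r$, and therefore $\det\jac_x(f)\at_{A(z)}=\pm\prod_{r=1}^{n}\det J_r\neq 0$.

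I do not expect a serious obstacle: the argument is essentially bookkeeping. The one point deserving care is verifying that the proposed column permutation genuinely yields a block-diagonal matrix — which is precisely the structural observation above, that for each fixed $r$ both the nonzero rows and the nonzero columns are confined to the block $[k]_r$ — together with confirming that the denominators $(A^{(1)}(\lambda_q))_{rr}$ are nonzero, which is already available from the excerpt. Identifying the reduced blocks as Vandermonde matrices and invoking distinctness of the $\lambda_q$ then finishes the proof.
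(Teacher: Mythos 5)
Your proposal is correct and follows essentially the same route as the paper's own proof: both read the entries of $\jac_x(f)\at_{A(z)}$ from Corollary~\ref{cor:paritalderivatives}, scale row $q$ by $-\left(A^{(1)}(\lambda_q)\right)_{rr}$ (nonzero since $A^{(1)}(z)$ is diagonal with $\det A^{(1)}(\lambda_q)\neq 0$), and permute columns to obtain a block-diagonal matrix whose $k\times k$ blocks are Vandermonde matrices in the distinct $\lambda_q$, $q\in[k]_r$. Your explicit verification that the sparsity pattern confines the nonzero entries of each row group $[k]_r$ to the columns $x_{0,r},\ldots,x_{k-1,r}$ is exactly the (implicit) structural fact the paper relies on.
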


\begin{proof}
Corollary \ref{cor:paritalderivatives} implies that $\jac_x(f) \at_{A(z)}$ is 
\begin{equation}
J = - \left[ \begin{array}{c|c|c}
\begin{array}{cccc}
\frac{1}{\left(A^{(1)}(\lambda_1)\right)_{11}} & 0 & \cdots & 0\\
\vdots & \vdots & \ddots & \vdots\\
\frac{1}{\left(A^{(1)}(\lambda_k)\right)_{11}} & 0 & \cdots & 0
\end{array} & 
\begin{array}{ccc}
&&\\
&\cdots&\\
&&
\end{array}
& \begin{array}{cccc}
\frac{\lambda_1^{k-1}}{\left(A^{(1)}(\lambda_1)\right)_{11}} & 0 & \cdots & 0\\
\vdots & \vdots & \ddots & \vdots\\
\frac{\lambda_k^{k-1}}{\left(A^{(1)}(\lambda_k)\right)_{11}} & 0 & \cdots & 0
\end{array}\\ \hline
&&\\
\vdots & \ddots & \vdots \\ 
&&\\\hline
\begin{array}{cccc}
0 & \cdots & 0 & \frac{1}{\left(A^{(1)}(\lambda_{(n-1)k+1})\right)_{nn}}\\
\vdots & \ddots & \vdots & \vdots\\
0 & \cdots & 0 & \frac{1}{\left(A^{(1)}(\lambda_{nk})\right)_{nn}}
\end{array} & \cdots & \begin{array}{cccc}
0 & \cdots & 0 & \frac{\lambda_{(n-1)k+1}^{k-1}}{\left(A^{(1)}(\lambda_{(n-1)k+1})\right)_{nn}}\\
\vdots & \ddots & \vdots & \vdots\\
0 & \cdots & 0 & \frac{\lambda_{nk}^{k-1}}{\left(A^{(1)}(\lambda_{nk})\right)_{nn}}
\end{array}
\end{array} \right].
\end{equation}
Multiply $J$ by $-1$, and multiply row $q$ of $J$ by $\left(A^{(1)}(\lambda_q)\right)_{rr}$, for $q = 1,2,\ldots,kn$, and for the corresponding $r$, then reorder the columns to get
\begin{equation}
\left[ \begin{array}{c|c|c}
\begin{array}{cccc}
1 & \lambda_1 & \cdots & \lambda_1^{k-1}\\
1 & \lambda_2 & \cdots & \lambda_2^{k-1}\\
\vdots & \vdots & \ddots & \vdots\\
1 & \lambda_k & \cdots & \lambda_k^{k-1}\\
\end{array} & \begin{array}{ccc} 
& \cdots & \end{array} & O  \\ \hline
&&\\
\vdots & \ddots & \vdots\\
&&\\ \hline
O & \cdots  & \begin{array}{cccc}
1 & \lambda_{(n-1)k+1} & \cdots & \lambda_{(n-1)k+1}^{k-1}\\
1 & \lambda_{(n-1)k+2} & \cdots & \lambda_{(n-1)k+2}^{k-1}\\
\vdots & \vdots & \ddots & \vdots\\
1 & \lambda_{nk} & \cdots & \lambda_{nk}^{k-1}\\
\end{array}
\end{array} \right]
,
\end{equation}
which is a block diagonal matrix where each diagonal block is an invertible Vandermonde matrix since the $\lambda$'s are all distinct. Hence $J$ is nonsingular.
\end{proof}


\section{Existence Theorem} \label{sec:existencetheorem}
Now we use a version of the implicit function theorem to establish the existence of a solution for the structured inverse proper value problem (see \cite{dr14,kran02}). 

\begin{theorem}\label{IFT}
Let $F: \mathbb{R}^{s+r} \rightarrow \mathbb{R}^s$ be a continuously differentiable function on an open subset $U$ of $\mathbb{R}^{s+r}$ defined by 
	\begin{equation}
		F(\bm x, \bm y)=(F_1(\bm x, \bm y), F_2(\bm x, \bm y), \ldots, F_s(\bm x, \bm y)),
	\end{equation}
	where $\bm x = (x_1, \ldots, x_s) \in \mathbb{R}^s$ and $\bm y \in \mathbb{R}^r$. Let $(\bm a, \bm b)$ be an element of $U$ with $\bm a\in \mathbb{R}^s$ and $\bm b \in \mathbb{R}^r$,  and $\bm c$ be an element of $\mathbb{R}^s$ such that $F(\bm a, \bm b) = \bm c$. If 
	\begin{equation}
		\left[\frac{\partial F_i}{\partial x_{j}}\;{\rule[-3.6mm]{.1mm} {8mm}}_{(\bm a, \bm b)} \right]
	\end{equation}
is nonsingular, then there exist an open neighbourhood $V$ of $\bm a$ and an open neighbourhood $W$ of $\bm b$ such that  $V \times W \subseteq U$ and for each $\bm y \in W$ there is an $\bm x \in V$ with $F(\bm x, \bm y) = \bm c$.
\end{theorem}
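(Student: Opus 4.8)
The plan is to derive this from the Banach fixed point theorem by a standard contraction argument; alternatively one could quote the classical inverse function theorem, but a direct proof is short and self-contained. First I would normalize: translating in the domain and codomain, assume without loss of generality that $\bm a = \bm 0$, $\bm b = \bm 0$, and $\bm c = \bm 0$. Let $A$ denote the nonsingular matrix $\bigl[\partial F_i/\partial x_j\bigr]$ evaluated at $(\bm 0,\bm 0)$ and set $G(\bm x,\bm y) := \bm x - A^{-1}F(\bm x,\bm y)$, which is $C^1$ on $U$. The point of this substitution is that, for fixed $\bm y$, an $\bm x$ satisfies $F(\bm x,\bm y)=\bm 0$ if and only if $G(\bm x,\bm y)=\bm x$, so it suffices to produce a fixed point of the map $\bm x \mapsto G(\bm x,\bm y)$ for every $\bm y$ near $\bm 0$.

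Next I would use that $\partial G/\partial \bm x = I - A^{-1}\bigl(\partial F/\partial \bm x\bigr)$ vanishes at $(\bm 0,\bm 0)$. By continuity of $DF$ on $U$, pick $\delta>0$ with $\overline{B}(\bm 0,\delta)\times\overline{B}(\bm 0,\delta)\subseteq U$ and $\bigl\|\partial G/\partial\bm x(\bm x,\bm y)\bigr\| \le \tfrac12$ on this set; since $\overline{B}(\bm 0,\delta)$ is convex, the mean value inequality makes $\bm x\mapsto G(\bm x,\bm y)$ a $\tfrac12$-contraction on that ball for each such $\bm y$. To make it a self-map I would estimate $\|G(\bm x,\bm y)\| \le \|G(\bm x,\bm y)-G(\bm 0,\bm y)\| + \|G(\bm 0,\bm y)\| \le \tfrac12\|\bm x\| + \|A^{-1}F(\bm 0,\bm y)\|$, and then, using $F(\bm 0,\bm y)\to F(\bm 0,\bm 0)=\bm 0$ as $\bm y\to\bm 0$, choose $\delta'\in(0,\delta]$ so small that $\|A^{-1}F(\bm 0,\bm y)\| \le \tfrac{\delta}{4}$ whenever $\|\bm y\|\le\delta'$; then $\|G(\bm x,\bm y)\|\le\delta$ for all $\bm x\in\overline{B}(\bm 0,\delta)$.

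Then the Banach fixed point theorem gives, for each $\bm y$ with $\|\bm y\|\le\delta'$, a point $\bm x=\bm x(\bm y)\in\overline{B}(\bm 0,\delta)$ with $F(\bm x(\bm y),\bm y)=\bm 0=\bm c$; moreover the self-map estimate forces $\|\bm x(\bm y)\| \le \tfrac12\|\bm x(\bm y)\| + \tfrac{\delta}{4}$, hence $\|\bm x(\bm y)\|\le\tfrac{\delta}{2}<\delta$, so the solution lies in the \emph{open} ball. Finally I would take $W:=\{\bm y:\|\bm y\|<\delta'\}$ and $V:=\{\bm x:\|\bm x\|<\delta\}$, undo the translations, and note $V\times W\subseteq U$ by the choice of $\delta$. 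Only existence of $\bm x\in V$ is asked for, although the same argument yields uniqueness in the ball and continuity of $\bm y\mapsto\bm x(\bm y)$; and an alternative route is to apply the inverse function theorem to $\Phi(\bm x,\bm y):=(F(\bm x,\bm y),\bm y)$, whose Jacobian at $(\bm a,\bm b)$ is block-triangular with determinant $\det[\partial F_i/\partial x_j]\ne 0$, and then read the implicit solution off the first block of $\Phi^{-1}$. There is no genuine obstacle here — the statement is classical — and the only point demanding care is coordinating the two radii $\delta$ and $\delta'$ so that the contraction property, the self-map property, and membership of the solution in the open set $V$ hold simultaneously.
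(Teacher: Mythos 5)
Your argument is correct and complete: the reduction to a fixed-point problem via $G(\bm x,\bm y)=\bm x - A^{-1}F(\bm x,\bm y)$, the contraction estimate from $\partial G/\partial\bm x$ vanishing at the base point, the coordination of the two radii for the self-map property, and the final observation that the fixed point lands in the \emph{open} ball are all sound; this is the standard Banach fixed-point proof of the implicit function theorem. Note, however, that the paper does not prove this statement at all --- Theorem 5.1 is quoted as a classical result with references to the literature --- so there is no in-paper argument to compare against; your write-up simply supplies a self-contained proof of a result the authors take as given.
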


Recall that we are looking for a matrix polynomial of degree $k$, with given proper values and a given graph for each non-leading coefficient. The idea is to start with the diagonal matrix Eq.~\eqref{diagonalexample} and perturb the zero off-diagonal entries corresponding to the edges of the graphs to some small nonzero numbers in a symmetric way. As long as the {\em perturbations are sufficiently small}, the implicit function theorem guarantees that the diagonal entries can be adjusted so that the proper values remain unchanged.

Note also that, in the next statement, the assigned graphs $G_0,G_1,\cdots,G_{k-1}$ determine the structure of the coefficients $A_0,\cdots,A_{k-1}$ of $A(z)$.

\begin{theorem}\label{thm:main}
	Let $\lambda_1,\lambda_2,\ldots,\lambda_{nk}$ be $nk$ distinct real numbers, let $\alpha_{k,1},\ldots,\alpha_{k,n}$ be positive (nonzero) real numbers and, for $0 \leq s \leq k-1$, let $G_s$ be a graph on $n$ vertices. 

Then there is an $n \times n$ real symmetric matrix polynomial $A(z) = \sum_{s=0}^{k} A_s z^s$ 
for which:\\
\textup{(a)} the proper values are $\lambda_1,\lambda_2,\ldots,\lambda_{nk}$, \\
\textup{(b)} the leading coefficient is $A_k = \diag[\alpha_{k,1}, \alpha_{k,2}, \ldots, \alpha_{k,n}]$, \\
\textup{(c)} for $s=0,1,\ldots,k-1$, the graph of $A_s$ is $G_s$.
\end{theorem}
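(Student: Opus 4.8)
The plan is to combine the three ingredients assembled in the previous sections: the explicit diagonal matrix polynomial $A(z)$ of Eq.~\eqref{diagonalexample} (which already satisfies (a) and (b) and has each non-leading coefficient diagonal), the nonsingularity of $\jac_x(f)$ at this $A(z)$ from Theorem \ref{thm:nonsigularjacobian}, and the implicit function theorem (Theorem \ref{IFT}). First I would set up the parametrized matrix polynomial $M(z,\bm x,\bm y)$ of Eq.~\eqref{eq.mm}, where the $\bm x$ variables are the diagonal entries of the $k$ coefficients $A_0,\ldots,A_{k-1}$ and the $\bm y$ variables are the (symmetric) off-diagonal entries located exactly at the edges of the graphs $G_0,\ldots,G_{k-1}$. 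By Eq.~\eqref{matrixemmata}, the point $(\bm x,\bm y) = (\bm\alpha_0,\ldots,\bm\alpha_{k-1},\bm 0,\ldots,\bm 0)$ recovers the diagonal $A(z)$, whose $nk$ proper values $\lambda_1,\ldots,\lambda_{nk}$ are distinct by hypothesis.

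The key point is that distinctness of the proper values is an open condition, and at distinct proper values the function $f$ of Eq.~\eqref{functioneff} is continuously differentiable on a neighbourhood of the base point: each $\lambda_q(M)$ is a simple zero of $\det M(z,\bm x,\bm y)$, so by the analytic/implicit dependence of a simple proper value on the coefficients (the mechanism already used in Lemma \ref{lem}), $f$ is $C^1$ near $(\bm\alpha,\bm 0)$. Then I would apply Theorem \ref{IFT} with $F = f$, the role of ``$\bm x$'' played by the $kn$ diagonal variables, the role of ``$\bm y$'' played by the $m$ edge variables, $\bm a = (\bm\alpha_0,\ldots,\bm\alpha_{k-1})$, $\bm b = \bm 0$, and $\bm c = (\lambda_1,\ldots,\lambda_{nk})$; the required nonsingularity of $\partial F_i/\partial x_j$ at $(\bm a,\bm b)$ is precisely the content of Theorem \ref{thm:nonsigularjacobian}. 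This yields neighbourhoods $V$ of $\bm a$ and $W$ of $\bm 0$ so that for every $\bm y \in W$ there is $\bm x(\bm y) \in V$ with $f(\bm x(\bm y),\bm y) = \bm c$, i.e. the matrix polynomial $M(z,\bm x(\bm y),\bm y)$ has exactly the prescribed proper values.

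It remains to pick a single $\bm y^\ast \in W$ whose every coordinate is nonzero; since $W$ is open and nonempty this is immediate (e.g. take $\bm y^\ast$ close enough to $\bm 0$ with all entries a small common $\epsilon \neq 0$). For this choice the coefficient $A_s := M_s(\bm x_s(\bm y^\ast), \bm y_s^\ast)$ has, by Definition \ref{3.1}, a nonzero entry at position $(i_\ell,j_\ell)$ for every edge $\{i_\ell,j_\ell\}$ of $G_s$ and zero entries elsewhere off the diagonal, so the graph of $A_s$ is exactly $G_s$, giving (c); the leading coefficient is unchanged, giving (b); and $f(\bm x(\bm y^\ast),\bm y^\ast)=\bm c$ gives (a). Symmetry and realness hold throughout because $M_s$ is real symmetric by construction. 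I would also remark that since $W$ is open, there is in fact an open family of such solutions, so the solution is far from unique.

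The main obstacle to watch is the regularity/continuity hypothesis of the implicit function theorem: one must be sure that on a genuine open neighbourhood of $(\bm\alpha,\bm 0)$ all $nk$ proper values stay real, simple, and depend $C^1$ (indeed analytically) on $(\bm x,\bm y)$, so that $f$ is well-defined and differentiable there — this is where distinctness of the $\lambda_q$ is essential and where one invokes the simple-proper-value perturbation theory underlying Lemma \ref{lem} and Corollary \ref{cor:standardunitevec}. Everything else (the Vandermonde computation for the Jacobian, the bookkeeping of which edge sits in which coefficient) has already been dispatched, so the proof is essentially an assembly.
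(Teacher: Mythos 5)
Your proposal is correct and follows essentially the same route as the paper: start from the diagonal polynomial of Eq.~\eqref{diagonalexample}, invoke the nonsingularity of $\jac_x(f)$ from Theorem \ref{thm:nonsigularjacobian}, apply the implicit function theorem, and then choose a perturbation vector with all entries nonzero to realize the graphs $G_s$. Your explicit attention to the $C^1$ regularity of $f$ near the base point (via simple-proper-value perturbation theory) is a point the paper leaves implicit, but it does not change the argument.
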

\begin{proof}
	Without loss of generality assume that $\lambda_1 < \lambda_2 < \cdots < \lambda_{nk}$. 
Let $G_s$ have $m_s$ edges for $s=0,1,\cdots,k-1$ and  $m = m_0 + \cdots + m_{k-1}$, the total number of edges.
Let $\bm a = (\alpha_{0,1}, \alpha_{0,2}, \ldots, \alpha_{k,n}) \in \mathbb{R}^{nk}$, where $\alpha_{s,r}$ are defined as in Eq.~\eqref{eq:wow}, for $s=0,1,\ldots,k-1$ and $r = 1,2,\ldots,n$, and let $\bm 0$ denote $(0,0,\ldots,0) \in \mathbb{R}^m$. Also, let $A(z)$ be the diagonal matrix polynomial given by Eq.~\eqref{diagonalexample}, which has the given proper values. Recall from Eq.~\eqref{matrixemmata} that $A(z) = M(z,\bm a, \bm 0)$. Let the function $f$ be defined by Eq.~\eqref{functioneff}. Then 
	\begin{equation}
		f\at_{A(z)} = f(z, \bm a, \bm 0) = (\lambda_1,\lambda_2,\ldots,\lambda_{nk}).
	\end{equation}
By Theorem \ref{thm:nonsigularjacobian} the function $f$ has a nonsingular Jacobian at $A(z)$. 

By Theorem \ref{IFT} (the implicit function theorem), there is an open neighbourhood $U \subseteq \mathbb{R}^{nk}$ of $\bm a$ and an open neighbourhood $V \subseteq \mathbb{R}^{m}$ of $\bm 0$ such that for every $\bm \varepsilon \in V$ there is some $\bm{\bar{a}} \in U$ (close to $\bm a$) such that 
	\begin{equation}
	f(z,\bm{\bar{a}},\bm \varepsilon) = (\lambda_1,\lambda_2,\ldots,\lambda_{nk}).
	\end{equation}
	Choose $\bm \varepsilon \in V$ such that none of its entries are zero, and let $\bar{A}(z) = M(z,\bm{\bar{a}},\bm \varepsilon)$. Then $\bar{A}(z)$ has the given proper values, and by definition, the graph of $A_s$ is $G_s$, for $s = 0,1,\ldots,k-1$.
\end{proof}

Note that the proof of Theorem \ref{thm:main} shows only that there is an $m$ dimensional open set of matrices $\bar{A}(z)$ with the given graphs and proper values, and we say nothing about the size of this set. In the quadratic examples of Section \ref{Sec:ex}, the parameter $m$ becomes the total number of springs and dampers. In this context we have:

\begin{corollary}\label{lcor}
	Given graphs $G$ and $H$ on $n$ vertices, a positive definite diagonal matrix $M$, and $2n$ distinct real numbers $\lambda_1, \lambda_2, \ldots, \lambda_{2n}$, there are real symmetric matrices $D$ and $K$ whose graphs are $G$ and $H$, respectively, and the quadratic matrix polynomial $L(z) = M z^2 + D z + K$ has proper values $\lambda_1,\lambda_2, \ldots, \lambda_{2n}$.
\end{corollary}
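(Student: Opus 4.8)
The plan is to derive this corollary as the special case $k=2$ of Theorem~\ref{thm:main}, together with a relabelling of the data to match the quadratic notation used in Section~\ref{Sec:ex}. First I would observe that the hypotheses line up exactly: a positive definite diagonal matrix $M = \diag[\mu_1,\ldots,\mu_n]$ supplies the positive numbers $\alpha_{2,1},\ldots,\alpha_{2,n}$ (set $\alpha_{2,r} = \mu_r > 0$); the two prescribed graphs $H$ and $G$ play the roles of $G_1$ and $G_0$ respectively (so that $G_1 = H$ governs the damping coefficient and $G_0 = G$ governs the stiffness coefficient); and the $2n$ distinct reals $\lambda_1,\ldots,\lambda_{2n}$ are precisely the $nk = 2n$ distinct reals required by the theorem.

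Next I would simply invoke Theorem~\ref{thm:main} with $k=2$. It yields a real symmetric matrix polynomial $A(z) = A_2 z^2 + A_1 z + A_0$ whose proper values are $\lambda_1,\ldots,\lambda_{2n}$, whose leading coefficient is $A_2 = \diag[\alpha_{2,1},\ldots,\alpha_{2,n}] = M$, and whose coefficients $A_1$ and $A_0$ have graphs $G_1 = H$ and $G_0 = G$ respectively. Setting $D := A_1$ and $K := A_0$, one reads off that $D$ and $K$ are real symmetric, their graphs are $H$ and $G$ (as required — I would make sure to state the correspondence so that ``the graph of $D$ is $H$ and the graph of $K$ is $G$'' matches the statement, since the corollary as written pairs $G$ with $D$'s-companion in one order and $H$ with $K$'s; a careful reading shows the corollary asks for graphs $G$ and $H$ for $D$ and $K$, so I would assign $G_1 = G$, $G_0 = H$ to respect that), and $L(z) = M z^2 + D z + K = A(z)$ has the prescribed proper values. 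That completes the proof.

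There is essentially no obstacle here beyond bookkeeping: the entire substance of the result — the construction of the diagonal seed polynomial in Eq.~\eqref{diagonalexample}, the nonsingularity of the Jacobian $\jac_x(f)$ at that seed (Theorem~\ref{thm:nonsigularjacobian}, via the Vandermonde structure), and the implicit-function-theorem argument (Theorem~\ref{IFT}) that recovers the spectrum after a small symmetric off-diagonal perturbation realizing the graphs — is already carried out in full generality for arbitrary degree $k$. The only point requiring a word of care is the matching of which prescribed graph is assigned to which coefficient, and the trivial identification $M = A_k$ with the positive numbers $\alpha_{k,t}$; both are immediate. I would therefore keep the proof to a few lines, essentially: ``Apply Theorem~\ref{thm:main} with $k = 2$, $\alpha_{2,r}$ equal to the diagonal entries of $M$, and $G_0, G_1$ the given graphs; set $K = A_0$ and $D = A_1$.''
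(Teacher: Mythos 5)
Your proposal is correct and matches the paper exactly: the paper states Corollary \ref{lcor} as an immediate consequence of Theorem \ref{thm:main} with $k=2$, $A_2 = M$, and the given graphs assigned to $G_0$ and $G_1$, which is precisely your argument. Your attention to which graph is paired with which coefficient (so that the graph of $D$ is $G$ and the graph of $K$ is $H$, per the corollary's wording) is the only nontrivial bookkeeping, and you handle it correctly.
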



\section{Numerical Examples}
In this section we provide two numerical examples corresponding to the two systems of Examples \ref{ex:linearsystem} and \ref{ex:systemofmassesandsprings}. Both examples correspond to quadratic systems on four vertices, and in both cases the set of proper values is chosen to be the set of distinct real numbers $\{ -2,-4,\ldots, -16\}$. The existence of matrix polynomials with given proper values and graphs given below is guaranteed by Corollary \ref{lcor}. For a numerical example, we choose all the nonzero off-diagonal entries to be $0.5$. Then the multivariable Newton method is used to approximate the adjusted diagonal entries to arbitrary precision.

We mention in passing that to say ``off-diagonal entries are sufficiently small'' means that Newton's method starts with an initial point sufficiently close to a root. Also, since all the proper values are simple, the iterative method will converge locally. But the detailed analysis of convergence rates and radii of convergence are topics for a separate paper. 

In the following examples we provide an approximation of the coefficient matrices rounded to show ten significant digits. However, the only error in the computations is that of root finding, and in this case, that of Newton's method, and the proper values of the resulting approximate matrix polynomial presented here are accurate to 10 significant digits. The Sage code to carry the computations can be found on github \cite{sagecodeongithub}. 

\begin{example}
Let $\Lambda = \{-2,-4,-6, \ldots, -16\}$, and let the graphs $G$ and $H$ be as shown in Figure \ref{fig:graphsofmatricesdeeandkaytridiagonalagain}. The goal is to construct a quadratic matrix polynomial
\begin{equation} 
	L(z) = M z^2 + D z + K,\hspace*{1cm} M,D,K \in \mathbb{R}^{n\times n},
\end{equation} 
where the graph of $D$ is $H$, the graph of $K$ is $G$ (in this case, both are tridiagonal matrices), and the proper values of $L(z)$ are given by $\Lambda$.
\begin{figure}[h]
\begin{center}
\begin{tikzpicture}[scale=.6,node distance = 1.5cm]
	\begin{scope}[shift={(0,0)},scale=1]
	\node[] () at (-1.5,0) {$G:$};
	\node[draw,circle,fill = light-red,fill opacity=0.2, text opacity = 1] (1) at (0,0) {$1$};
	\node[draw,circle,fill = light-red,fill opacity=0.2, text opacity = 1] (2) [right of = 1] {$2$};
	\node[draw,circle,fill = light-red,fill opacity=0.2, text opacity = 1] (3) [right of = 2] {$3$};
	\node[draw,circle,fill = light-red,fill opacity=0.2, text opacity = 1] (4) [right of = 3] {$4$};
	\draw[very thick,color=dark-blue] (1) -- (2) -- (3) -- (4);
	\end{scope}
	
	\begin{scope}[shift={(0,-4)},scale=1]
	\node[] () at (-1.5,0) {$H:$};

	\node[draw,circle,fill = light-red,fill opacity=0.2, text opacity = 1] (1) at (0,0) {$1$};
	\node[draw,circle,fill = light-red,fill opacity=0.2, text opacity = 1] (2) [right of = 1] {$2$};
	\node[draw,circle,fill = light-red,fill opacity=0.2, text opacity = 1] (3) [right of = 2] {$3$};
	\node[draw,circle,fill = light-red,fill opacity=0.2, text opacity = 1] (4) [right of = 3] {$4$};
	\draw[very thick,color=dark-red] (1) -- (2) -- (3) -- (4);
	\end{scope}
\end{tikzpicture}
\end{center}
\caption{Graphs of $K$ and $D$ of Eq.~\eqref{tridiagonalmatrices}.}
\label{fig:graphsofmatricesdeeandkaytridiagonalagain}
\end{figure}
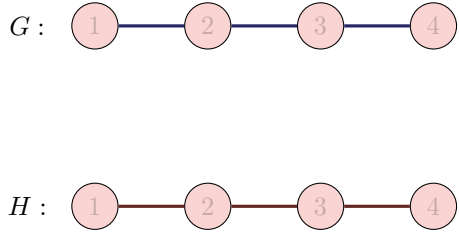 

For simplicity, choose $M$ to be the identity matrix. We start with a diagonal matrix polynomial $A(z)$ whose proper values are given by $\Lambda$:
\begin{small}
\begin{equation}\label{ex:ayofzed}
	A(z) = \left[\begin{array}{rrrr}
1 & 0 & 0 & 0 \\
0 & 1 & 0 & 0 \\
0 & 0 & 1 & 0 \\
0 & 0 & 0 & 1
\end{array}\right] z^2 + \left[\begin{array}{rrrr}
6 & 0 & 0 & 0 \\
0 & 14 & 0 & 0 \\
0 & 0 & 22 & 0 \\
0 & 0 & 0 & 30
\end{array}\right] z + \left[\begin{array}{rrrr}
8 & 0 & 0 & 0 \\
0 & 48 & 0 & 0 \\
0 & 0 & 120 & 0 \\
0 & 0 & 0 & 224
\end{array}\right] 
\end{equation}
\end{small}
Note that the $(1,1)$ entries are the coefficients of $(x-2)(x-4)$, the $(2,2)$ entries are the coefficients of $(x-6)(x-8)$ and so on. Then, perturb all the superdiagonal entries and subdiagonal entries of $A(z)$ to $0.5$ and, using Newton's method, adjust the diagonal entries so that the proper values remain intact. An approximation of the perturbed matrix polynomial $L(z)$ is given by:
\begin{small}
\begin{equation}
D \approx \left[\begin{array}{cccc}
5.86747042533934 & 0.5 & 0 &
0 \\
0.5 & 13.6131619433928 & 0.5 &
0 \\
0 & 0.5 & 21.6432681505587 &
0.5 \\
0 & 0 & 0.5 &
30.8760994807091
\end{array}\right],
\end{equation}
\begin{equation}
K \approx \left[\begin{array}{cccc}
7.74561103829716 & 0.5 & 0 &
0 \\
0.5 & 46.6592230163013 & 0.5 &
0 \\
0 & 0.5 & 119.082534340571 &
0.5 \\
0 & 0 & 0.5 &
240.017612939283
\end{array}\right]
\end{equation}
\end{small}
\qed
\end{example}

\begin{example}
Let $\Lambda = \{-2,-4,-6, \ldots, -16\}$, and let graphs $G$ and $H$ be as shown in Figure \ref{fig:graphsofmatricesdeeandkayagain}. The goal is to construct a quadratic matrix polynomial
\begin{equation} 
	L(z) = M z^2 + D z + K,\hspace*{1cm} M,D,K \in \mathbb{R}^{n\times n},
\end{equation} 
where the graph of $D$ is $H$, the graph of $K$ is $G$, and the proper values of $L(z)$ are given by $\Lambda$.
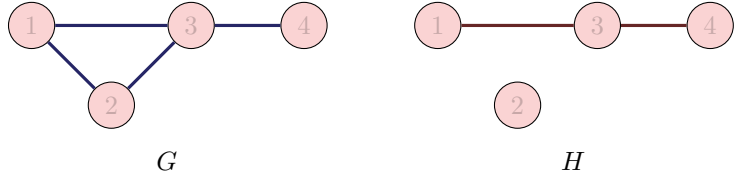
\begin{figure}[h]
\begin{center}
\begin{tikzpicture}[scale=.6,node distance = 1.5cm]
	\begin{scope}[shift={(0,0)},scale=1]
	\node[] () at (3,-3) {$G$};
	\node[draw,circle,fill = light-red,fill opacity=0.2, text opacity = 1] (1) at (0,0) {$1$};
	\node[draw,circle,fill = light-red,fill opacity=0.2, text opacity = 1] (2) [below right of = 1] {$2$};
	\node[draw,circle,fill = light-red,fill opacity=0.2, text opacity = 1] (3) [above right of = 2] {$3$};
	\node[draw,circle,fill = light-red,fill opacity=0.2, text opacity = 1] (4) [right of = 3] {$4$};
	\draw[very thick,color=dark-blue] (4) -- (3) -- (2) -- (1) -- (3);
	\end{scope}
	
	\begin{scope}[shift={(9,0)},scale=1]
	\node[] () at (3,-3) {$H$};
	\node[draw,circle,fill = light-red,fill opacity=0.2, text opacity = 1] (1) at (0,0) {$1$};
	\node[draw,circle,fill = light-red,fill opacity=0.2, text opacity = 1] (2) [below right of = 1] {$2$};
	\node[draw,circle,fill = light-red,fill opacity=0.2, text opacity = 1] (3) [above right of = 2] {$3$};
	\node[draw,circle,fill = light-red,fill opacity=0.2, text opacity = 1] (4) [right of = 3] {$4$};
	\draw[very thick,color=dark-red] (4) -- (3) -- (1);
	\end{scope}
\end{tikzpicture}
\end{center}
\caption{Graphs of $K$ and $D$.}
\label{fig:graphsofmatricesdeeandkayagain}
\end{figure}  

Choose $M$ to be the identity matrix and start with the same diagonal matrix polynomial $A(z)$ as in Eq.~\eqref{ex:ayofzed}. Perturb those entries of $A(z)$ corresponding to an edge to $0.5$ and, using  Newton's method, adjust the diagonal entries so that the proper values are not perturbed. An approximation of the matrix polynomial $L(z)$ is given by:
\begin{small}
\begin{equation}
D \approx \left[\begin{array}{cccc}
5.96497947933414 & 0 & 0.5 &
0 \\
0 & 13.9962664239873 & 0 &
0 \\
0.5 & 0 & 21.2163179014646 &
0.5 \\
0 & 0 & 0.5 &
30.8224361952140
\end{array}\right],
\end{equation}
\begin{equation}
 K \approx \left[\begin{array}{cccc}
7.94384133116825 & 0.5 & 0.5 &
0 \\
0.5 & 48.0284454626440 & 0.5 &
0 \\
0.5 & 0.5 & 113.276104063793 &
0.5 \\
0 & 0 & 0.5 &
239.067195294473
\end{array}\right].
\end{equation}
\end{small}
\qed
\end{example}

\section{Conclusions}

Linked vibrating systems consisting of a collection of rigid components connected by springs and dampers require the spectral analysis of matrix functions of the form Eq.~\eqref{eq1}. As we have seen, mathematical models for the analysis of such systems have been developed by
Chu and Golub (\cite{cdy07,cg02,cg05}) and by Gladwell \cite{g04}, among others. The mass distribution in these models is just that of the components, and elastic and dissipative
properties are associated with the {\em linkage} of the parts, rather than the parts themselves.

Thus, for these models, the leading coefficient (the mass matrix) is a positive definite diagonal matrix. The damping and stiffness matrices have a zero-nonzero structure dependent on graphs (e.g. tridiagonal for a path) which, in turn, determine the {\em connectivity} of the components of the system.

In this paper a technique has been developed for the solution of some {\em inverse} vibration problems
in this context for matrix polynomials of a general degree $k$ as in Eq.~\eqref{eqa}, and then the results are applied to the specific case of quadratic polynomials, with significant applications. Thus, given a real spectrum for the system, we show how corresponding real coefficient matrices $M$, $D$, and $K$ can be found, and numerical examples are included. The technique applies equally well to some higher-order differential systems, and so the theory has been 
developed in that context.

In principle, the method developed here could be extended to the designs of systems with some (possibly all) non-real proper values appearing in conjugate pairs as is done for the linear case in \cite{h17}.

\section*{Acknowledgement}
Funding: This work is supported by the Natural Sciences and Engineering Research Council of Canada (NSERC).

\section*{References}
\bibliography{ref}

\end{document}